\numberwithin{equation}{section}  
\newtheorem{theorem}{Theorem}[section]  
\newtheorem{lemma}[theorem]{Lemma}  
\newtheorem{proposition}[theorem]{Proposition}  
\newtheorem{corollary}[theorem]{Corollary}
\theoremstyle{definition}  
\newtheorem{definition}[theorem]{Definition}
\newtheorem{remark}[theorem]{Remark}  
\newtheorem{example}[theorem]{Example}
\newtheorem{notation}[theorem]{Notation}  
\newtheorem{question}[theorem]{Question}  
\DeclareMathOperator{\ass}{ass}
\DeclareMathOperator{\Ann}{Ann}  
\DeclareMathOperator{\Ass}{Ass}
\begin{document}  
    
   
\title[In the Shadows of a hypergraph]{In the Shadows of a hypergraph: looking for associated primes of powers of squarefree monomial ideals}
  
\author{Erin Bela}  
\address{Department of Mathematics, 255 Hurley, University of Notre Dame, Notre Dame, IN 46556}  
\email{ebela@nd.edu}  
\urladdr{}

\author{Giuseppe Favacchio}  
\address{Dipartimento di Matematica e Informatica, Viale A. Doria, 6 - 95100 - Catania, Italy}  
\email{favacchio@dmi.unict.it}  
\urladdr{http://www.dmi.unict.it/~gfavacchio}

\author{Nghia Tran}  
\address{School of Mathematics, Statistics and Applied Mathematics, National University of Ireland, Galway, Ireland}  
\email{n.tranthihieu1@nuigalway.ie}  
\urladdr{}  
  
\keywords{cover ideals, associated primes, powers of ideals}  
\subjclass[2000]{05C65, 13F55, 05E99, 13C99  } 
\thanks{Version: \today}

\maketitle  

\begin{abstract}
	The aim of this paper is to study the associated primes of  powers of squarefree monomial ideals. Hypergraphs and squarefree monomial ideals are strongly connected. 	The cover ideal $J(H)$ of a hypergraph $H$ is the intersection of the primes corresponding to the edges of $H$.  We define the $shadow$ of $H$, as a certain set of smaller hypergraphs related to $H$. We then describe how the shadows of $H$ preserve information about the associated primes of the powers of $J(H)$. Some implications to the persistence property are studied.	
\end{abstract}
  
  
\section*{Introduction} \label{s.intro}  
The primary decomposition of ideals in Noetherian rings is a fundamental result in commutative algebra and algebraic geometry. 
From a minimal primary decomposition, one can define the set of the associated primes by taking the radical of each ideal in the decomposition. Our goal in this paper is to investigate the associated primes of powers of squarefree monomial ideals.

Squarefree monomial ideals and powers of ideals are central objects in combinatorial and commutative algebra and algebraic geometry for the several connections they encode between these areas, see for instance \cite{MS}. 

There are several ways to relate a squarefree monomial ideal to a hypergraph. To serve our intent, we will associate to a hypergraph the squarefree monomial ideal with minimal primes corresponding to the edges of the hypergraph, and vice versa.  This ideal is usually called the cover ideal of the hypergraph. 

The associated primes of a squarefree monomial ideal are easy to describe, whereas computing the associated primes of a power could be really tricky. Currently, the set of the associated primes of a power of any squarefree monomial ideal is far from being fully understood. 
Recently, in \cite{TMR} Kaiser, Stehl\' ik and \u Skrekovski produced an example of a squarefree monomial ideal, precisely the cover ideal of a graph, which fails the persistence property, i.e., the set of the associated primes could ``lose" some elements from a power to the next. 

For ideals associated to a combinatorial object, one hopes to explain their behavior in terms of the original object.  With this in mind, we define the shadow of a hypergraph, Definition \ref{d.shadow}, as a certain set of smaller hypergraphs  related to the original one. We show that, see Theorem \ref{t.oddcycle}, Theorem \ref{t.tildeH}, the shadows preserve information about the associated primes of a power of the cover ideal of the hypergraph.  

The paper is organized as follows. 
In Section \ref{s.prelims}, we introduce the terminology and the basic results. 
In Section \ref{s.intro shadow}, we define the shadows of a hypergraph that are the new tool introduced in this paper. Then we start an investigation of the associated primes of a squarefree monomial ideal in terms of the shadows of the associated hypergraph. In particular, in this section, we deal with the second power.
In Section \ref{s.main case}, under some restrictive conditions, we broaden our investigation to any power.
Finally, in Section \ref{s.application}, we apply the results of Section \ref{s.main case} to the persistence property. 

\textbf{Acknowledgments.} 
This project started during the PRAGMATIC 2017 Research
School ``Powers of ideals and ideals of powers" held in Catania, Italy. We would like to thank the University of Catania and the organizers of the workshop, Alfio Ragusa, Elena Guardo, Francesco Russo, and Giuseppe Zappal\`a.  As well, we are deeply grateful to Adam Van Tuyl for introducing us to this topic and for his guidance and to Huy T\`ai H\`a for the useful comments. We also thank them together with Brian Harbourne  and Enrico Carlini for their inspiring lectures. Computations were carried out with CoCoA \cite{COCOA} and Macaulay2 \cite{MACAULAY2}.

 
\section{Notation and basic facts} \label{s.prelims}
   Let $V := \{x_1,\ldots,x_n\}$ and $R=K[V]=K[x_1,\ldots, x_n]$ be the standard polynomial ring in $n$ variables over a field $K$.
   A squarefree monomial ideal $I\subseteq R$ always has a unique minimal primary decomposition, $I=\mathfrak p_1\cap \cdots \cap \mathfrak p_t$,  as an intersection  of squarefree prime ideals $\mathfrak p_i=(x_{i_1}, \ldots, x_{i_s})$. For more details and a full description of the topic we refer to Section 1.3 in \cite{HH}.
  
   This property establishes a one-to-one correspondence between squarefree monomial ideals and finite simple hypergraphs.
   First, recall that a (simple) {\bf hypergraph} $H$ is a pair $H=(V,E)$ where $V := \{x_1,\ldots,x_n\}$ is called the set of vertices of $H$ and $E$ is a collection of subsets of $V$.  (We will only consider finite simple hypergraphs, these are also called $clutters$ in the literature).
   We will denote, for a set $U=\{x_{i_1},\ldots,x_{i_s}  \}\subseteq V$,  by $$\mathfrak p_{U}:= (x_{i_1},\ldots,x_{i_s}) \subseteq R$$ the prime ideal generated by the variables in $U$ and by 
   $$x_U:=x_{i_1}\cdots x_{i_s} \in R$$ 
   the monomial given by the product of the variables in U.
   
   Then, a hypergraph $H=(V,E)$ unequivocally corresponds to the squarefree monomial ideal $J(H):=\bigcap\limits_{e\in E} \mathfrak p_e$, called the {\bf cover ideal} of $H$, and vice versa.
   
   Let $H = (V,E)$ be a hypergraph. A subset $T$ of $V$ is a {\bf vertex cover} of $H$ if every edge $e\in E$ contains at least one element of $T$.  A vertex cover $T$ is a {\bf minimal vertex cover} if no proper subset of $T$ is a vertex cover.  
   Minimal vertex covers are related to the minimal generators of $J(H)$. Indeed, $T$ is a minimal vertex cover of $H$ if and only if $x_T\in \mathcal G(J(H)),$  the set of monomials which minimally generates $J(H).$
   See \cite{HVT} and \cite{HVT2} for a further investigation on cover ideals of hypergraphs. 
         
   In this paper we are interested in the study of the associated prime ideals of the (regular) powers of $J(H)$. Recall the following, classical,  definition. 
   \begin{definition} \label{d.assocprime} Let $R$ be a ring and $I$ an ideal of $R$. A prime ideal $\mathfrak p \subset R$ is called an 
 	{\bf associated prime ideal} of $I$ if there exists some element $m \in R/I$ such that $\mathfrak p=\Ann(m)$, the annihilator of $m$. The set of all associated prime ideals of $I$ is denoted by $\ass(I)$.
   \end{definition}  
   
    By definition, the hypergraph $H=(V,E)$ easily provides a description of all elements in  $\ass(J(H))$. Indeed,  $\mathfrak p_U\in  \ass (J(H))$ if and only if $U\in E$. 
    
    In order to describe the associated primes of the powers of $J(H)$ the next lemma is an essential tool. Recall that for an hypergraph $H = (V,E)$ and $U\subseteq V$ the {\bf induced subhypergraph} of $H$ on $U$ is the hypergraph  $H_U=(U,E(U))$ where $E(U)=\{e\in E\ |\ e\subseteq U \}$.  
 
     \begin{lemma} [Lemma 2.11 \cite{FHVT}]\label{l.localization} Let $H=(V,E)$ be a hypergraph.	Let $U\subseteq V$, then
 	\[
 	\mathfrak p_U \in \Ass(R/J(H)^s) \Leftrightarrow \mathfrak p_U \in \Ass(K[U]/J(H_U)^s).
 	\]	
    \end{lemma}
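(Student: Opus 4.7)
The plan is to prove both directions simultaneously by localizing at the multiplicative set $S$ consisting of all monomials in the variables of $V \setminus U$. Since $\mathfrak p_U$ is generated by variables in $U$, none of which appear in any element of $S$, we have $\mathfrak p_U \cap S = \emptyset$, so by the standard correspondence between associated primes and localization, $\mathfrak p_U \in \Ass(R/J(H)^s)$ if and only if $\mathfrak p_U S^{-1}R \in \Ass_{S^{-1}R}(S^{-1}R / S^{-1}(J(H)^s))$.

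Next I would identify $S^{-1}R = K[U][x_j^{\pm 1} : x_j \in V \setminus U]$, a Laurent polynomial extension of $K[U]$, and show that $S^{-1}J(H) = J(H_U)\cdot S^{-1}R$. This follows from the primary decomposition $J(H) = \bigcap_{e \in E} \mathfrak p_e$: because localization commutes with finite intersections, $S^{-1}J(H) = \bigcap_{e \in E} S^{-1}\mathfrak p_e$, and one checks directly that $S^{-1}\mathfrak p_e = S^{-1}R$ whenever $e \not\subseteq U$ (some variable in $e$ becomes a unit), while $S^{-1}\mathfrak p_e = \mathfrak p_e S^{-1}R$ otherwise, so only the edges of $H_U$ survive. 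Since localization commutes with taking powers, $S^{-1}(J(H)^s) = J(H_U)^s \cdot S^{-1}R$.

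Finally, I would invoke faithful flatness of the inclusion $K[U] \hookrightarrow S^{-1}R$. For any prime $\mathfrak q$ of $K[U]$, the fiber $(K[U]/\mathfrak q)[x_j^{\pm 1}]$ is a domain, so $\mathfrak q \cdot S^{-1}R$ is prime, and the standard theorem on associated primes under flat extensions yields
\[
\Ass_{S^{-1}R}\bigl(S^{-1}R / J(H_U)^s S^{-1}R\bigr) = \{\mathfrak q\, S^{-1}R : \mathfrak q \in \Ass_{K[U]}(K[U]/J(H_U)^s)\}.
\]
Specializing to $\mathfrak q = \mathfrak p_U$, viewed as the maximal ideal $(x_i : x_i \in U)$ of $K[U]$, produces the stated equivalence. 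I expect the main technical care to go into verifying the identity $S^{-1}J(H) = J(H_U)\,S^{-1}R$ and checking that the chosen $S$ does correctly encode the transition from $R$ to $K[U]$; neither is deep, but each requires bookkeeping between ideals in $R$, $S^{-1}R$, and $K[U]$.
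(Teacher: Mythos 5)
Your proof is correct, and since the paper states this lemma as a citation of [FHVT, Lemma 2.11] without reproducing a proof, there is nothing internal to compare against; the argument in the cited source is the same standard one you give, namely localizing at the monomials in the variables of $V\setminus U$, observing that only the primes $\mathfrak p_e$ with $e\subseteq U$ survive, and transferring associated primes along the faithfully flat extension $K[U]\hookrightarrow K[U][x_j^{\pm 1}: x_j\in V\setminus U]$. All the steps you flag as needing bookkeeping (localization commuting with finite intersections and powers, primality of $\mathfrak q\, S^{-1}R$) go through exactly as you describe.
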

    
    Lemma \ref{l.localization} moves the problem of looking for associated prime ideals to maximal ideals. Indeed,  $\mathfrak p_U$ is associated to  $J(H)^s$, if and only if it is associated to $J(H_U)^s\subseteq K[U]$, and $\mathfrak p_U$ is the maximal ideal in $K[U]$. 
    An immediate consequence of Lemma \ref{l.localization} is a first, well known, step in the description of the elements in $\ass J(H)^s.$
    \begin{lemma}\label{l.edges}
    	Let $H$ be a hypergraph. Then $\mathfrak p_e\in \ass J(H)^s$ for each integer $s\ge 1$ and for each edge $e$ of $H$.
    \end{lemma}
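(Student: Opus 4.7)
The plan is to reduce to the induced subhypergraph $H_e$ via Lemma \ref{l.localization} and then to observe that in $K[e]$ the ideal $\mathfrak p_e$ is the homogeneous maximal ideal, for which the conclusion is immediate.

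First, I would apply Lemma \ref{l.localization} with $U=e$: this translates the goal $\mathfrak p_e\in\ass(R/J(H)^s)$ into the statement $\mathfrak p_e\in\ass(K[e]/J(H_e)^s)$, where $H_e$ is the subhypergraph of $H$ induced on the vertex set $e$. Next, I would identify the ideal $J(H_e)$. By construction, the edges of $H_e$ are those $f\in E$ with $f\subseteq e$. Since the hypergraphs considered in the paper are simple (i.e.\ clutters), no edge of $H$ is properly contained in another edge, so the only edge of $H_e$ is $e$ itself. Consequently
\[
J(H_e)=\mathfrak p_e \quad\text{in } K[e].
\]

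Finally, $\mathfrak p_e=(x_{i_1},\ldots,x_{i_s})$ is the (unique) homogeneous maximal ideal of the polynomial ring $K[e]=K[x_{i_1},\ldots,x_{i_s}]$, and powers of a maximal ideal are primary with respect to themselves. Hence $\mathfrak p_e\in\ass(K[e]/\mathfrak p_e^s)$ for every $s\ge 1$, and composing the two equivalences yields $\mathfrak p_e\in\ass(R/J(H)^s)$, as desired.

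There is no real obstacle here: the only point that requires a moment's care is the identification $J(H_e)=\mathfrak p_e$, which rests on the clutter convention adopted in the preliminaries (so that no other edge of $H$ can lie inside $e$). Everything else is a direct application of Lemma \ref{l.localization} and the elementary fact that a maximal ideal belongs to the associated primes of each of its powers.
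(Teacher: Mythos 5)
Your proof is correct and follows exactly the route the paper intends: the paper presents this lemma as an immediate consequence of Lemma \ref{l.localization} without writing out a proof, and your argument (reduce to $H_e$, use the clutter convention to get $J(H_e)=\mathfrak p_e$, then note that powers of the maximal ideal of $K[e]$ are primary) is precisely the standard filling-in of that claim. The one point you rightly flag --- that the identification $J(H_e)=\mathfrak p_e$ needs no edge of $H$ to be properly contained in $e$ --- is indeed where the paper's standing assumption that hypergraphs are clutters is used.
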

   
   A corollary of Lemma \ref{l.localization} will be useful in Section \ref{s.main case}. 
   \begin{corollary}\label{c.prime of subhypergraph} Let $H=(V,E)$ be a hypergraph on $V$.
   	Let $F\subseteq U\subseteq V$, then  
   	\[
     \mathfrak p_F \in \Ass(R/J(H)^s) \Leftrightarrow \mathfrak p_F \in \Ass(K[U]/J(H_U)^s).
   	\]
   \end{corollary}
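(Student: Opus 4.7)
The plan is to reduce both sides of the stated equivalence to the same statement about the ``maximally localized'' ring $K[F]$, using Lemma \ref{l.localization} twice.

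First, applying Lemma \ref{l.localization} directly to the hypergraph $H=(V,E)$ with the subset $F\subseteq V$, I obtain
\[
\mathfrak p_F \in \Ass(R/J(H)^s) \Longleftrightarrow \mathfrak p_F \in \Ass(K[F]/J(H_F)^s).
\]
Next, I want to apply Lemma \ref{l.localization} again, but this time regarding $H_U=(U,E(U))$ as a hypergraph on the vertex set $U$ (so that the ambient ring is $K[U]$), together with the subset $F\subseteq U$. This yields
\[
\mathfrak p_F \in \Ass(K[U]/J(H_U)^s) \Longleftrightarrow \mathfrak p_F \in \Ass(K[F]/J((H_U)_F)^s).
\]

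So it remains only to observe that induced subhypergraphs compose in the obvious way: since $F\subseteq U$, an edge $e\in E$ satisfies $e\subseteq F$ if and only if it satisfies both $e\subseteq U$ and $e\subseteq F$; hence $(H_U)_F = H_F$, and in particular $J((H_U)_F) = J(H_F)$. Combining the two equivalences above gives the corollary.

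The argument is essentially a double application of Lemma \ref{l.localization}, and the only point requiring a line of verification is the transitivity of the induced-subhypergraph operation; there is no real obstacle beyond keeping track of the ambient ring in each localization step.
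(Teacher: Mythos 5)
Your proposal is correct and is exactly the paper's argument: the paper's proof also invokes Lemma \ref{l.localization} (implicitly twice, once on each side) and rests on the single observation that $(H_U)_F = H_F$ when $F\subseteq U$. You have merely written out the two localization steps that the paper leaves as ``immediate.''
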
 
   \begin{proof}
   	It is an immediate consequence of Lemma \ref{l.localization}. Indeed, since $F\subseteq U$ we get $(H_U)_F=H_F.$
   \end{proof}

    In the literature there are only few other results explicitly describing the elements in $\ass (J(H)^s).$ Most of them deal with the case that $H$ is a graph, i.e., the edges all have cardinality 2. If $H$ is a graph we will often denote it by the letter $G$.
    For instance, see proposition below, the authors of \cite{FHVT2} describe the set $\Ass(R/J(G)^2)$. They prove that the new primes match the (minimal) odd cycles of $G.$   
    
    Recall that in a graph $G=(V,E)$ a set of distinct vertices $C =\{ x_{i_1}, x_{i_2}, \ldots, x_{i_n}\}\subseteq V$  is called  an\textit{ $n$-cycle} (or cycle of length $n$)  if $\{x_{i_j}, x_{i_{j+1}}\}\in E$ for each  $j \in \{1,\ldots,n\}$ and $x_{i_{n+1}} := x_{i_1}$.   
 	$C$ is called an odd (even) cycle if $n$ is odd (even). 
 	The vertices $x_{i_j}$, $x_{i_{j+1}}$ connected by an edge $\{x_{i_j}, x_{i_{j+1}}\}$ are called {\it adjacent} vertices in $C$. A {\it chord} of $C$ is an edge of $G$ joining two nonadjacent vertices. If $C$ has no chord, we shall call it {\it chordless}.

 	\begin{proposition}[Corollary 3.4, \cite{FHVT2}]\label{Corollary 3.5 cite{FHVT2}}
	 	Let $G$ be a finite graph.
	 	A prime ideal $\mathfrak p=(x_{i_1},\dots,x_{i_s})$ is in $\ass(J(G)^2)$ if and only if:  
	 	\begin{itemize}  
	 		\item[(a)] $s=2$ and $\mathfrak p\in \ass(J(G))$; or 
	 		\item[(b)] $s$ is odd, and after re-indexing, $\{x_{i_1}, x_{i_2}, \ldots, x_{i_s}\}$ is a chordless cycle of $G$.  
	 	\end{itemize}  
    \end{proposition}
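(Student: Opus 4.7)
The plan is to first apply Lemma \ref{l.localization} to $\mathfrak{p} = \mathfrak{p}_U$, reducing to the situation where $U$ equals the entire vertex set and $\mathfrak{p}_U$ is the maximal ideal of $K[U]$. Writing $V := U$ and replacing $G$ by $G_U$, the claim becomes: the maximal ideal $\mathfrak{m} := \mathfrak{p}_V$ lies in $\ass(J(G)^2)$ if and only if either $s = 2$ and $\{x_{i_1}, x_{i_2}\}$ is an edge of $G$, or $s$ is odd and $G$ is a chordless $s$-cycle on $V$. The sufficiency in case (a) is immediate from Lemma \ref{l.edges}. For case (b), where $G = C_s$ is a chordless odd cycle, I would take as witness the squarefree monomial $m := x_{i_1} \cdots x_{i_s}$. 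A minimum vertex cover of $C_s$ has cardinality $(s+1)/2$, so every minimal generator of $J(C_s)^2$ has degree at least $s+1$; since $\deg m = s$, this gives $m \notin J(C_s)^2$. For each index $k$, removing $x_{i_k}$ from $C_s$ leaves a path on the remaining $s-1$ (even) vertices, whose standard bipartition produces two vertex covers $A$ and $B$ of the path, each of size $(s-1)/2$. Then $T_1 := A \cup \{x_{i_k}\}$ and $T_2 := B \cup \{x_{i_k}\}$ are minimal vertex covers of $C_s$ satisfying $x_{T_1} x_{T_2} = x_{i_k} m$, so $x_{i_k} m \in J(C_s)^2$. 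Hence $\mathfrak{m} = (J(C_s)^2 : m)$.

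For the necessity direction, I would fix a monomial $m \notin J(G)^2$ with $x_{i_k} m \in J(G)^2$ for every $k$. The cases $s = 1$ (impossible, as simple graphs have no loops) and $s = 2$ (which forces $\{x_{i_1}, x_{i_2}\}$ to be an edge, giving case (a)) are immediate. For $s \geq 3$, I would show in turn: (i) $G$ has no isolated vertex, else the corresponding variable does not occur in $J(G)$ and $\mathfrak{m}$ cannot be associated; (ii) $G$ is not bipartite, because the classical theorem that $J(G)$ is normally torsion-free for bipartite $G$ gives $\ass(J(G)^n) = \ass(J(G))$ for every $n$, a set containing only edge primes, which contradicts $s \geq 3$. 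Hence $G$ must contain an odd cycle $C$. The remaining task is to show that $V(C) = V$ and $E(G) = E(C)$, i.e., that $G$ is itself a chordless odd cycle spanning $V$.

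The main obstacle is this final step. Using the existential conditions $x_{T_1^{(k)}} x_{T_2^{(k)}} \mid x_{i_k} m$ and the universal obstruction $x_T x_{T'} \nmid m$ for every pair of vertex covers $T, T'$, a natural route is to fix a shortest odd cycle $C$ in $G$ and argue that every exponent of $m$ equals $1$ and that the support of $m$ is exactly $V(C) = V$; any chord of $C$ or any edge with an endpoint outside $V(C)$ would then produce a strictly smaller vertex cover $T$, which together with a suitable partner $T'$ would give $x_T x_{T'} \mid m$, contradicting $m \notin J(G)^2$. The delicate bookkeeping on the exponents of $m$ and on the minimal vertex covers of graphs with superfluous edges is where the technical weight of the argument lies.
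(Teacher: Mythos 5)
First, a point of comparison: the paper does not prove this statement at all — it is imported verbatim as Corollary 3.4 of \cite{FHVT2} — so there is no internal proof to measure yours against. Judged on its own terms, your proposal is sound and complete for the ``if'' direction: the reduction via Lemma \ref{l.localization}, the degree count showing $x_{i_1}\cdots x_{i_s}\notin J(C_s)^2$, and the two covers $A\cup\{x_{i_k}\}$, $B\cup\{x_{i_k}\}$ built from the bipartition of the path obtained by deleting $x_{i_k}$ are exactly the computation the paper records as Lemma \ref{l.oddcycle} and exploits in Theorem \ref{t.oddcycle}.

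The ``only if'' direction, however, contains a genuine gap, and you acknowledge it yourself. Everything up to ``the induced graph on $U$ contains an odd cycle'' is fine (the appeal to $J(G)^{(k)}=J(G)^k$ for bipartite $G$ is legitimate, if heavy machinery), but the entire content of the theorem is the final step: that the induced graph is \emph{exactly} a chordless odd cycle. Your sketch of that step is aimed in the wrong direction. A chord of $C$ cannot force some $x_Tx_{T'}$ to divide $m$ and thereby contradict $m\notin J(G)^2$: once the exponents of $m$ are all $1$, a divisibility $x_Tx_{T'}\mid m$ forces $T\cap T'=\emptyset$, and two disjoint vertex covers exist only in bipartite graphs, so this contradiction is unavailable precisely in the non-bipartite case you are in. What a chord actually breaks is the condition $x_jm\in J(G)^2$ for \emph{some} $j$: for $C_5$ plus the chord $\{x_1,x_3\}$, one checks that $x_4\cdot x_1x_2x_3x_4x_5\notin J(G)^2$ because the only minimum covers containing $x_4$ are $\{x_1,x_3,x_4\}$ and $\{x_1,x_2,x_4\}$, which overlap outside $x_4$. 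Ruling out \emph{every} candidate witness $m$ (not just the squarefree one — you also defer the argument that pins down its exponents) for every non-bipartite graph that is not an induced odd cycle is the technical heart of the Francisco--H\`a--Van Tuyl proof, and it is absent here.
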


\section{Introducing the shadows}\label{s.intro shadow}
 
	 The authors of \cite{FHVT} give a description of the set $\ass(J(H)^s)$ in terms of the coloring properties of the hypergraph $H$. However, their method is not very efficient to list all the elements in  $\ass(J(H)^s)$ for any given hypergraph $H$. In this section, we introduce a tool that can be useful for this aim: we define the shadows of a hypergraph. The motivating idea is to take information from some other hypergraphs, smaller than $H$,  and to bring it to $H$.
	
	The following is the definition of a shadow of $H.$
	\begin{definition}\label{d.shadow}
		Let $H=(V,E)$ be a hypergraph. We say that a hypergraph  $H'=(V', E')$ is a \textit{shadow} of $H$ if 
     \begin{itemize}
		\item[(a)] $V'\subseteq V$; and
	 	\item[(b)] $|E|=|E'|$ (same cardinalities) and  $e\cap V' \in E'$ for each $e\in E.$ 
	 \end{itemize}  
	\end{definition}
	We denote by $\mathcal{S}(H)$ the set of all the shadows of $H.$  Note that two different elements in $\mathcal{S}(H)$ have different vertex sets.  Thus $H'=(V',E')\in \mathcal{S}(H)$ will be also called the shadow of $H$ on $V'$. By definition,
	$H$ is always a shadow of itself on the vertex set $V$; we refer to this as the \textit{trivial} shadow. However, not every subset of $V$ produces a shadow of $H$, as we show in the following example. 

	\begin{example}\label{e.triangles} Consider the hypergraph $H$ on the vertex set $V= \{x_1,x_2,x_3,x_4,x_5\}$ with the edge set $E=\left\{  \{x_1,x_2,x_3\}, \{x_2,x_3,x_4\},\{x_1,x_4,x_5\}  \right\}$. 
	The set $\mathcal S(H)$ contains non-trivial elements, namely, shadows on the vertex sets $V_1:=\{x_1,x_2,x_4\}$, $V_2:=\{x_1,x_3,x_4\},$ $V_3:=\{x_1,x_2, x_3,x_4\}$, $V_4:=\{x_1,x_3,x_4,x_5\}$ and $V_5:=\{x_1,x_2,x_4,x_5\}.$ Indeed, we have
	\begin{align*}
		\left(V_1, \{ \{x_1,x_2\}, \{x_2,x_4\},\{x_1,x_4\} \}\right)&\in \mathcal S(H), \text{ and}\\
		\left( V_2,\{ \{x_1,x_3\}, \{x_3,x_4\},\{x_1,x_4\} \}\right)&\in \mathcal S(H).
	\end{align*}
	Both of these shadows are graphs, more precisely they are 3-cycles. Additionally, 
	\begin{align*}
		&\left( V_3, \{ \{x_1,x_2, x_3\}, \{x_2, x_3,x_4\},\{x_1,x_4\} \} \right)\in \mathcal S(H), \\
		&\left( V_4, \{ \{x_1,x_3\}, \{x_3,x_4\},\{x_1,x_4,x_5\} \} \right)\in \mathcal S(H) \text{ and }\\
		& \left( V_5, \{ \{x_1,x_2\}, \{x_2,x_4\},\{x_1,x_4,x_5\} \} \right)\in \mathcal S(H).
	\end{align*}
	Furthermore, 
	for instance, $H$ has no shadow on the set $V_6:=\{x_1,x_2,x_3\}$ since we get   
	$$\left( V_6, \{ \{x_1,x_2,x_3\}, \{x_2,x_3\},\{x_1\} \}\right),$$
	and this fails to be a simple hypergraph. 
	\end{example}
	The hypergraph $H$ and its shadows are showed in the following figures
\begin{center}
\begin{tikzpicture}[scale=0.7]
    \node (v1) at (0,0) {};
    \node (v2) at (2.5,0) {};
    \node (v3) at (1.25,2.17) {};
    \node (m1) at (1.88,1.08)[right]{$x_3$};
    \node (m2) at (1.25,0) [below]{$x_3$};
    \node (m3) at (0.63,1.08) [left]{$x_5$};
    \node (f1) at (1.25,-1) [below]{Hypergraph $H$};
    \draw [thick] (v1)--(v2);
    \draw [thick] (v2)--(v3);
    \draw [thick] (v3)--(v1);
    \fill (v1) circle (0.1) node [below left] {$x_4$};
    \fill (v2) circle (0.1) node [below right] {$x_2$};
    \fill (v3) circle (0.1) node [above] {$x_1$};
    \node (v4) at (6,0) {};
    \node (v5) at (8.5,0) {};
    \node (v6) at (7.25,2.17) {};
    \node (f2) at (7.25,-1) [below]{Shadow on $V_1$};
    \draw [thick] (v4)--(v5);
    \draw [thick] (v5)--(v6);
    \draw [thick] (v6)--(v4);
    \fill (v4) circle (0.1) node [below left] {$x_4$};
    \fill (v5) circle (0.1) node [below right] {$x_2$};
    \fill (v6) circle (0.1) node [above] {$x_1$};
     \node (a1) at (12,0) {};
    \node (a2) at (14.5,0) {};
    \node (a3) at (13.25,2.17) {};
    \node (m1) at (13.88,1.08)[right]{$x_3$};
    \node (m2) at (13.25,0) [below]{$x_3$};
    \node (f4) at (13.25,-1) [below]{Shadow on $V_3$};
    \draw [thick] (a1)--(a2);
    \draw [thick] (a2)--(a3);
    \draw [thick] (a3)--(a1);
    \fill (a1) circle (0.1) node [below left] {$x_4$};
    \fill (a2) circle (0.1) node [below right] {$x_2$};
    \fill (a3) circle (0.1) node [above] {$x_1$};
     \node (v7) at (18,0) {};
    \node (v8) at (20.5,0) {};
    \node (v9) at (19.25,2.17) {};
    \node (m3) at (18.63,1.08) [left]{$x_5$};
    \node (f3) at (19.25,-1) [below]{Shadow on $V_5$};
    \draw [thick] (v7)--(v8);
    \draw [thick] (v8)--(v9);
    \draw [thick] (v9)--(v7);
    \fill (v7) circle (0.1) node [below left] {$x_4$};
    \fill (v8) circle (0.1) node [below right] {$x_2$};
    \fill (v9) circle (0.1) node [above] {$x_1$};
\end{tikzpicture}
\end{center}
where the edge $\{a,b,v_1,\ldots, v_m\}$ is depicted as the segment 
\begin{tikzpicture}[scale=0.7]
    \node (v1) at (0,0) {};
    \node (v2) at (2.5,0) {};
    \node (m2) at (1.25,0) [above]{$v_1 \ldots v_m$};
    \fill (v1) circle (0.1) node [left] {$a$};
    \fill (v2) circle (0.1) node [right] {$b$};
 \draw [thick] (v1)--(v2);
\end{tikzpicture}. Similarly, we can re-picture the hypergraph $H$ in the following form to better see the shadows on $V_2$ and $V_4$: 
\begin{center}
\begin{tikzpicture}[scale=0.7]
    \node (v1) at (0,0) {};
    \node (v2) at (2.5,0) {};
    \node (v3) at (1.25,2.17) {};
    \node (m1) at (1.88,1.08)[right]{$x_2$};
    \node (m2) at (1.25,0) [below]{$x_2$};
    \node (m3) at (0.63,1.08) [left]{$x_5$};
    \node (f1) at (1.25,-1) [below]{Hypergraph $H$};
    \draw [thick] (v1)--(v2);
    \draw [thick] (v2)--(v3);
    \draw [thick] (v3)--(v1);
    \fill (v1) circle (0.1) node [below left] {$x_4$};
    \fill (v2) circle (0.1) node [below right] {$x_3$};
    \fill (v3) circle (0.1) node [above] {$x_1$};
    \node (v4) at (6,0) {};
    \node (v5) at (8.5,0) {};
    \node (v6) at (7.25,2.17) {};
    \node (f2) at (7.25,-1) [below]{Shadow on $V_2$};
    \draw [thick] (v4)--(v5);
    \draw [thick] (v5)--(v6);
    \draw [thick] (v6)--(v4);
    \fill (v4) circle (0.1) node [below left] {$x_4$};
    \fill (v5) circle (0.1) node [below right] {$x_3$};
    \fill (v6) circle (0.1) node [above] {$x_1$};
     \node (v7) at (12,0) {};
    \node (v8) at (14.5,0) {};
    \node (v9) at (13.25,2.17) {};
    \node (m3) at (12.63,1.08) [left]{$x_5$};
    \node (f3) at (13.25,-1) [below]{Shadow on $V_4$};
    \draw [thick] (v7)--(v8);
    \draw [thick] (v8)--(v9);
    \draw [thick] (v9)--(v7);
    \fill (v7) circle (0.1) node [below left] {$x_4$};
    \fill (v8) circle (0.1) node [below right] {$x_3$};
    \fill (v9) circle (0.1) node [above] {$x_1$};
\end{tikzpicture}
\end{center}
	In the following example, we show a hypergraph which only has trivial shadow.
\begin{example}
	Let $H$ be the hypergraph on the vertex set $V= \{x_1,x_2,x_3,x_4,x_5\}$ and the edge set $E=\left\{  \{x_1,x_2,x_3\}, \{x_2,x_3,x_4\},\{x_3,x_4,x_5\},\{x_4,x_5,x_1\},\{x_5,x_1,x_2\}  \right\}$. In this case, the set $\mathcal{S}(H)$ has only one element, namely $H$. Indeed, notice that each edge of $H$ contains vertices with ``consecutive" indexes. Since any subset of $V$ with two elements is contained in some edge, then $H$ has no shadow on any set $V'\subsetneq V$. For instance, $H$ has no shadow on the subset $V'$  obtained from $V$ by removing  $x_1$ since  $\{x_4,x_5\} \subset \{x_3,x_4,x_5\}$.
\end{example}

	$J(H')$ is an ideal of $K[V']$ and there is a natural inclusion from $K[V']$ into $K[V]$. The ideal generated by the image of $J(H')$ under this map, i.e., the ideal generated by $\mathcal G(J(H'))\subseteq K[V]$, is called cone ideal of $J(H')$ in $K[V].$ 

The next lemma provides a connection between the monomial generators of $J(H)$ and $J(H')$ for a shadow $H'$ of $H$.

\begin{lemma} \label{l.gens Shad are gens Hyper}
	Let $H=(V,E)$ be a hypergraph and $H'=(V',E')\in \mathcal{S}(H)$ a shadow of $H$. Then $\mathcal G(J(H'))\subseteq \mathcal G(J(H))$.
\end{lemma}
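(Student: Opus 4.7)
The plan is to translate the algebraic inclusion into a combinatorial one using the characterization from Section~\ref{s.prelims}: the minimal monomial generators of $J(H)$ are exactly the monomials $x_T$ with $T$ a minimal vertex cover of $H$, and likewise for $H'$. So it suffices to prove that every minimal vertex cover of $H'$ is also a minimal vertex cover of $H$ (regarded as a subset of $V$ via the inclusion $V'\subseteq V$).

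Fix a minimal vertex cover $T\subseteq V'$ of $H'$. First I would check that $T$ covers $H$. Given any $e\in E$, condition (b) of Definition~\ref{d.shadow} yields $e\cap V'\in E'$, and since $T$ covers $H'$ there exists $v\in T\cap(e\cap V')\subseteq T\cap e$. This handles the covering property with a one-line argument.

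For minimality, I would argue by contradiction: suppose some proper subset $T''\subsetneq T$ still covers $H$. The goal is to show $T''$ already covers $H'$, which contradicts the assumed minimality of $T$. Here the key combinatorial fact is that the assignment $\phi\colon E\to E'$, $e\mapsto e\cap V'$, is a bijection; this follows since condition (b) makes $\phi$ well defined with image in $E'$, and the cardinality equality $|E|=|E'|$ forces $\phi$ to be surjective (equivalently injective) because if some two edges of $E$ had the same shadow then $|\phi(E)|<|E|=|E'|$ and the image would be a proper subset of $E'$, which is incompatible with the definition. Surjectivity of $\phi$ is the one step where I would pause, since it is the only place the hypothesis $|E|=|E'|$ is really used.

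Given the bijection, every $e'\in E'$ is of the form $e\cap V'$ for a unique $e\in E$. Since $T''$ covers $e$, there is $v\in T''\cap e$; because $T''\subseteq T\subseteq V'$, the vertex $v$ lies in $V'$, so $v\in T''\cap e\cap V'=T''\cap e'$. Thus $T''$ covers every edge of $H'$, contradicting the minimality of $T$ in $H'$. Therefore $T$ is a minimal vertex cover of $H$, giving $x_T\in \mathcal{G}(J(H))$ and completing the proof.
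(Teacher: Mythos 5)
Your proof is correct and follows the same route as the paper: identify the minimal generators with minimal vertex covers and show that a minimal vertex cover of $H'$ is a minimal vertex cover of $H$. The paper dispatches the combinatorial step in one line (``by the definition of shadow, $U$ is also a minimal vertex cover of $H$''), whereas you supply the details, in particular the use of $|E|=|E'|$ to make $e\mapsto e\cap V'$ a bijection onto $E'$ --- which is exactly where the minimality transfers and is the point the paper's proof leaves implicit.
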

\begin{proof} 
	The ideal $J(H')$ is generated by monomials $x_U$ where $U$ is a minimal vertex cover of $H'$.  By the definition of shadow, $U$ is also a minimal vertex cover of $H$, and $U$ does not involve the variables in $V\setminus V'.$
\end{proof}

\begin{remark}\label{r.J(H') subset J(H)}
	From Lemma \ref{l.gens Shad are gens Hyper}, we have  $J(H')=K[V']\cap J(H)$. Thus, each element $m$ in $J(H')$ also belongs to $J(H)$.
\end{remark}
As a consequence of Lemma \ref{l.gens Shad are gens Hyper}, we get the following result.
\begin{lemma}\label{l.m not in J(H)^s}
	If $(J(H')^s:m)=\mathfrak{p}\neq  (1)$ for some prime ideal $\mathfrak{p}$, then $m \notin J(H)^s$.
\end{lemma}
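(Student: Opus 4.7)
The argument rests on two ingredients. The first is immediate: the hypothesis $(J(H')^s:m)=\mathfrak{p}\neq (1)$ directly forces $m\notin J(H')^s$, since otherwise the colon would equal the unit ideal. The second is an upgrade of Remark \ref{r.J(H') subset J(H)} from $J(H')$ to its $s$-th power, namely the identity
\begin{equation*}
J(H)^s\cap K[V']=J(H')^s.
\end{equation*}
Granted this identity, the conclusion falls out quickly. Since $J(H')^s$ is an ideal of $K[V']$, we have $m\in K[V']$; so if $m$ were in $J(H)^s$, then $m$ would lie in $J(H)^s\cap K[V']=J(H')^s$, contradicting what we just observed.

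The only step that requires real work is establishing the identity. The inclusion $\supseteq$ follows from $J(H')\subseteq J(H)$ (Remark \ref{r.J(H') subset J(H)}) together with the fact that every generator of $J(H')^s$ is a monomial supported on $V'$, hence lives in $K[V']$. For the opposite inclusion I would argue on monomials: take a monomial $f\in J(H)^s\cap K[V']$ and write $f=u_1\cdots u_s\cdot w$ with $u_j\in\mathcal G(J(H))$ and $w$ a monomial. Support is preserved under products, so $\mathrm{supp}(f)\subseteq V'$ forces $\mathrm{supp}(u_j)\subseteq V'$ for every $j$. Remark \ref{r.J(H') subset J(H)} then places each $u_j$ in $K[V']\cap J(H)=J(H')$, so $f$ is a product of $s$ elements of $J(H')$ and thus lies in $J(H')^s$.

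The main potential concern is whether the passage from Remark \ref{r.J(H') subset J(H)} (the case $s=1$) to higher powers introduces any new combinatorial subtlety; the sketch above shows that it does not, because support tracking reduces the power-level statement to the already established degree-one statement. Once the identity $J(H)^s\cap K[V']=J(H')^s$ is in hand, the lemma is a one-line corollary combined with the tautological observation about when a colon ideal equals the unit ideal.
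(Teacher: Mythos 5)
Your proposal is correct and follows essentially the same route as the paper: the paper also assumes $m\in J(H)^s$, factors $m=m_1\cdots m_sM$ into minimal generators of $J(H)$, notes that the support condition forces each $m_i$ (and $M$) into $K[V']$, hence each $m_i\in J(H')$ by Lemma \ref{l.gens Shad are gens Hyper}, and derives the contradiction $m\in J(H')^s$. Your only addition is to package this support-tracking step as the explicit identity $J(H)^s\cap K[V']=J(H')^s$, which is a harmless (and slightly more reusable) reformulation of the same argument.
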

\begin{proof}
	Suppose $m \in J(H)^s$, then $m=m_1\cdots m_sM$ where the $m_i$'s  are monomial minimal generators of $J(H)$. Since $m$ only contains the variables in $V'$, each $m_i$ will also have this property. That means, $m_i \in J(H')$ for all $i \in \{1,2,\ldots,s\}$. Hence $m \in J(H')^s$, which contradicts $(J(H')^s:m)\neq  (1)$.
\end{proof}

	The next results show the first evidences that our construction really serves our purpose. We strongly use the classification in Proposition \ref{Corollary 3.5 cite{FHVT2}} and assume the existence of a graph $G\in \mathcal{S}(H)$.
	Then, we show that $J(H)^2$  only has associated primes inherited from $J(G)^2$. 
	The following lemma can be deduced from Corollary 3.4 in \cite{FHVT2}. We also include a proof for the convenience of the reader.
 
\begin{lemma}\label{l.oddcycle}Let  $C_{2n+1}=(V,E)$ be an $(2n+1)$-cycle. Then $(J(C_{2n+1})^2:x_{V})=\mathfrak p_V$.
\end{lemma}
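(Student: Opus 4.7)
The plan is to establish both containments of $(J(C_{2n+1})^2 : x_V) = \mathfrak{p}_V$ separately, using constructive arguments for one direction and a degree count for the other.

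For the containment $\mathfrak{p}_V \subseteq (J(C_{2n+1})^2 : x_V)$, I need to show that $x_i \cdot x_V \in J(C_{2n+1})^2$ for every vertex $x_i$. By the rotational symmetry of the cycle, it suffices to handle $i=1$. Label the vertices so the edges are $\{x_j, x_{j+1}\}$ with indices mod $2n+1$. The plan is to write $x_1 \cdot x_V = x_{T_1} \cdot x_{T_2}$ where $T_1$ and $T_2$ are minimal vertex covers meeting exactly at $x_1$ and whose union is $V$. A natural choice is
\[
T_1 := \{x_1, x_3, x_5, \ldots, x_{2n+1}\}, \qquad T_2 := \{x_1, x_2, x_4, \ldots, x_{2n}\}.
\]
A direct check on the $2n+1$ edges confirms that each $T_j$ covers every edge and has the minimum cardinality $n+1$, so $x_{T_1}$ and $x_{T_2}$ lie in $\mathcal{G}(J(C_{2n+1}))$. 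Their product equals $x_1^2 \cdot x_2 \cdots x_{2n+1} = x_1 \cdot x_V$, giving the desired membership. Rotating the labeling handles each other $x_i$.

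For the reverse containment, I observe that $(J(C_{2n+1})^2 : x_V)$ is a monomial ideal, so it is contained in $\mathfrak{p}_V$ provided it is a proper ideal, i.e.\ provided $x_V \notin J(C_{2n+1})^2$. Suppose for contradiction that $x_V \in J(C_{2n+1})^2$. Then there exist minimal vertex covers $T_1, T_2$ of $C_{2n+1}$ and a monomial $M$ with $x_{T_1} x_{T_2} M = x_V$. Since the minimum vertex cover of an odd cycle $C_{2n+1}$ has cardinality $n+1$, we have $|T_1| + |T_2| \geq 2n+2$, hence $\deg(x_{T_1} x_{T_2}) \geq 2n+2 > 2n+1 = \deg x_V$, a contradiction.

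The main obstacle I anticipate is making the explicit pair $(T_1, T_2)$ transparent and verifying the cover conditions cleanly; the rest is a short degree argument. No subtlety is expected beyond recalling the standard fact that an odd cycle has independence number $n$ and hence covering number $n+1$.
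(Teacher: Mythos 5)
Your proof is correct and follows essentially the same route as the paper: the same decomposition $x_1x_V = x_{\{1,3,\ldots,2n+1\}}\cdot x_{\{1,2,4,\ldots,2n\}}$ for the containment $\mathfrak p_V\subseteq (J(C_{2n+1})^2:x_V)$, and the same degree count (minimal covers have $n+1$ vertices, so $J(C_{2n+1})^2$ sits in degree $\ge 2n+2 > \deg x_V$) to show $x_V\notin J(C_{2n+1})^2$. You spell out the symmetry and minimality checks that the paper leaves implicit, but there is no substantive difference.
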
 
\begin{proof} A minimal cover of $C_{2n+1}$ involves exactly $n+1$ vertices. Then $J(C_{2n+1})^2$ is generated in degree $2n+2$ and $x_V\notin J(C_{2n+1})^2.$ Moreover, $x_1x_V=x_{\{1,2,4,\ldots, 2n \}}\cdot x_{\{1,3,5,\ldots, 2n+1 \}}\in J(C_{2n+1})^2$. Analogously, we get  $x_ix_V\in J(C_{2n+1})^2 $ for each $x_i\in V.$
\end{proof}
 
\begin{theorem}\label{t.oddcycle}
	Let $H=(V,E)$ be a hypergraph. If $G\in \mathcal{S}(H)$ is an odd cycle (i.e., $G=C_{2n+1}$ for some positive integer $n$), then $\mathfrak p_V\in \ass J(H)^2$. 
\end{theorem}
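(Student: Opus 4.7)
The plan is to exhibit a monomial $m$ with $\mathfrak{p}_V = (J(H)^2 : m)$, which places $\mathfrak{p}_V \in \ass J(H)^2$ by Definition \ref{d.assocprime}. The natural candidate is $m := x_{V'}$, where $V' = V(G) = \{v_1, \ldots, v_{2n+1}\}$ and the edges of $G$ are the cyclic pairs $\{v_i, v_{i+1}\}$. The containment $(J(H)^2 : m) \subseteq \mathfrak{p}_V$ reduces to verifying $m \notin J(H)^2$, since $(J(H)^2 : m)$ is a monomial ideal and the only monomial outside $\mathfrak{p}_V$ is $1$. This falls out by combining Lemma \ref{l.oddcycle}, which gives $(J(G)^2 : m) = \mathfrak{p}_{V'} \neq (1)$, with Lemma \ref{l.m not in J(H)^s} applied to the shadow $G$ and $s=2$.

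For the reverse containment $\mathfrak{p}_V \subseteq (J(H)^2 : m)$, I need to show $y \cdot m \in J(H)^2$ for every variable $y \in V$. When $y \in V'$, Lemma \ref{l.oddcycle} gives $y m \in J(G)^2$, and since $J(G) \subseteq J(H)$ by Remark \ref{r.J(H') subset J(H)}, this upgrades to $y m \in J(H)^2$. When $y \in V \setminus V'$, pick an edge $e \in E$ containing $y$; the shadow condition forces $e \cap V' = \{v_j, v_{j+1}\}$ for some $j$, and after a cyclic relabeling of the cycle I may assume $j = 1$. Then I will use the two sets
\[
T_1' := \{y, v_3, v_5, \ldots, v_{2n+1}\} \qquad \text{and} \qquad T_2 := \{v_1, v_2, v_4, v_6, \ldots, v_{2n}\}.
\]
The set $T_2$ is a minimal vertex cover of $G$, and every cover of $G$ is automatically a cover of $H$ (because each edge of $H$ meets $V'$ in an edge of $G$), so $x_{T_2} \in J(H)$. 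For $T_1'$, the subset $\{v_3, v_5, \ldots, v_{2n+1}\}$ already covers every edge of $G$ except $\{v_1, v_2\}$, whose $H$-extension is the chosen edge $e$, covered by $y$; hence $T_1'$ covers $H$ and $x_{T_1'} \in J(H)$. Then $x_{T_1'} \cdot x_{T_2} = y \cdot x_{V'}$, giving $y m \in J(H)^2$.

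The main obstacle is designing $T_1'$ so that the swap $v_1 \rightsquigarrow y$ preserves vertex-coverage in $H$. The key observation is that in the odd-indexed cover $T_1 = \{v_1, v_3, v_5, \ldots, v_{2n+1}\}$ of $G$, the vertex $v_1$ is essential only for covering the edge $\{v_1, v_2\}$: the other incidence of $v_1$, the wrap-around edge $\{v_{2n+1}, v_1\}$, is already covered by $v_{2n+1} \in T_1$. Since $y$ lies in the $H$-edge extending $\{v_1, v_2\}$, replacing $v_1$ by $y$ preserves coverage of that edge without breaking any other. One should also note that the statement implicitly presumes $V$ has no isolated vertex of $H$; an isolated $y$ could never lie in an associated prime of $J(H)^2$, so this assumption is needed for the conclusion to make sense.
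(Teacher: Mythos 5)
Your proof is correct and follows essentially the same route as the paper's: the same candidate monomial $x_{V'}$, the same appeal to Lemma \ref{l.oddcycle} together with Lemma \ref{l.m not in J(H)^s} to get $x_{V'}\notin J(H)^2$, and the identical factorization $y\,x_{V'}=(y\,x_3x_5\cdots x_{2n+1})(x_1x_2x_4\cdots x_{2n})$ into two vertex covers of $H$ for vertices outside $V'$. Your extra care in checking that $T_1'$ really covers $H$ (via the bijection between $E$ and $E'$ forced by $|E|=|E'|$) and the remark about isolated vertices are sensible refinements of what the paper leaves implicit.
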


\begin{proof}
	Let $E = (e_1, \ldots, e_k)$. Since $G=(V',E')\in\mathcal S(H)$, the edges of $G$ are given by $\lbrace{e_1', \ldots, e_{k}'}\rbrace$ where $e_i' = e_i\cap V'$. By hypothesis, $G$ is an odd cycle, so $k=2n+1$ for some positive integer $n$.
	Without loss of generality, we relabel the vertices of $G$ so that 
\[ e_i'= \left\{ \begin{array}{ll}
     \lbrace{x_i, x_{i+1}}\rbrace, & \mbox{if $1\leq i \leq 2n$},\\
        \lbrace x_{2n+1},x_1\rbrace, & \mbox{if $i=2n+1$}.
    \end{array} \right. \] 
    
    From Proposition \ref{Corollary 3.5 cite{FHVT2}}, we know that $\mathfrak p_{V'}\in \ass J(G)^2$, and Lemma \ref{l.oddcycle} we have $(J(G)^2:x_{V'})=\mathfrak p_V'$, where $x_{V'}=\prod_{i=1}^{2n+1}x_i$. 
    Then, we claim that $(J(H)^2:x_{V'})=\mathfrak p_V$. If $x_j\in V'$, $x_j x_{V'} \in J(G)^2\subseteq J(H)^2$. So $x_j\in (J(H)^2:x_{V'})$. Moreover, if $y_j\in V\setminus V'$, then there exists an edge $e_i\in E$ such that $y_j \in e_i$.
	Without loss of generality, one can assume that $i=1$.
	Thus we have that
\begin{align*}
	y_j x_{V'} = y_j x_1x_2\cdots x_{2n+1} = (y_jx_3x_5\cdots x_{2n+1})(x_1x_2x_4\cdots x_{2n}).
\end{align*}
	The right hand side of the above equality is in $J(H)^2$ since it is the product of two vertex covers of $H$. Thus, $y_j\in (J(H)^2: x_{V'})$. Finally, $x_{V'}\notin J(H)^2$ since $x_{V'}\notin J(H')^2.$  
\end{proof}

\begin{example} 
	Let $H$ be the hypergraph in Example \ref{e.triangles}. Since, for instance, the shadow of $H$ on $\{x_1,x_2,x_4\}$ is an odd cycle, we can state that
	$$\mathfrak p_V=(x_1,x_2,x_3,x_4,x_5)\in \ass(J(H)^2).$$
\end{example}

	Now we show that Theorem \ref{t.oddcycle} works in a more general setting. We need some further notation.
	Let $H=(V,E)$ be a hypergraph and let $G=(V',E')\in \mathcal S(H)$ be a graph. For a subset $U\subset V'$, we denote by 
	$$\widehat U:= \bigcup_{e\in E, e'\subseteq U} e  \subseteq V.$$

\begin{corollary}\label{c.oddcyle1}
	Let $H$ be a hypergraph and $H'$ a shadow of $H$. If $C_{2n+1}$ is an odd cycle that is a subhypergraph of  $H'$,
	then $\mathfrak p_{\widehat C_{2n+1}}\in \ass (J(H)^2)$.
\end{corollary}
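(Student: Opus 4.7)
The plan is to localize on $\widehat C_{2n+1}$ and then apply Theorem~\ref{t.oddcycle} after realizing $C_{2n+1}$ as a shadow of the localized hypergraph. Concretely, let $\widetilde H := H_{\widehat C_{2n+1}}$ be the induced subhypergraph of $H$ on $\widehat C_{2n+1}$ and let $W := V(C_{2n+1})$. By Corollary~\ref{c.prime of subhypergraph} (with $F = U = \widehat C_{2n+1}$), the claim $\mathfrak p_{\widehat C_{2n+1}} \in \ass J(H)^2$ is equivalent to $\mathfrak p_{\widehat C_{2n+1}} \in \ass J(\widetilde H)^2$; and since $\widehat C_{2n+1}$ is now the full vertex set of $\widetilde H$, this latter statement is exactly the conclusion of Theorem~\ref{t.oddcycle} applied to $\widetilde H$---provided one can exhibit an odd cycle shadow of $\widetilde H$. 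The natural candidate is $C_{2n+1}$ itself.

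The main step is therefore to verify $C_{2n+1} \in \mathcal{S}(\widetilde H)$. For condition (a) of Definition~\ref{d.shadow}, $W \subseteq \widehat C_{2n+1}$ is immediate: each cycle vertex $x \in W$ lies in some cycle edge $e'_i$ of $H'$, whose preimage $e_i \in E(H)$ under the shadow bijection $E(H) \leftrightarrow E(H')$ satisfies $x \in e_i \subseteq \widehat C_{2n+1}$. For condition (b), the $2n+1$ preimages $e_1, \ldots, e_{2n+1}$ lie in $E(\widetilde H)$, and each projects to its corresponding cycle edge under $e \mapsto e \cap W$ (note that $e \cap W = e \cap V'$ for any $e \subseteq \widehat C_{2n+1}$, since $\widehat C_{2n+1} \cap V' = W$). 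The crucial point is that these are the only edges of $\widetilde H$: any $e \in E(H)$ with $e \subseteq \widehat C_{2n+1}$ has $e \cap V' \in E(H')$ with $e \cap V' \subseteq W$, so under the subhypergraph hypothesis this projection must coincide with one of the cycle edges, forcing $e$ to be one of the $e_i$ via the injectivity of the shadow map.

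Once $C_{2n+1} \in \mathcal{S}(\widetilde H)$ is established, Theorem~\ref{t.oddcycle} immediately yields $\mathfrak p_{\widehat C_{2n+1}} \in \ass J(\widetilde H)^2$, and the initial localization transfers the conclusion to $J(H)^2$. The step I expect to be the main obstacle is the last piece of the shadow verification---showing that every edge of $H'$ contained in $W$ is a cycle edge. This is essentially saying that $C_{2n+1}$ appears chordless inside $H'$ (equivalently, as an induced subhypergraph of $H'|_W$); without this, extra preimages appear in $\widetilde H$, the count $|E(\widetilde H)| = 2n+1$ fails, and Theorem~\ref{t.oddcycle} cannot be invoked verbatim. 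If one interprets ``$C_{2n+1}$ is a subhypergraph of $H'$'' in this strong (induced) sense, the argument goes through cleanly.
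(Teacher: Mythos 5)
Your proof follows the paper's argument exactly: pass to the induced subhypergraph $\tilde H := H_{\widehat C_{2n+1}}$, realize $C_{2n+1}$ as a shadow of $\tilde H$, apply Theorem \ref{t.oddcycle}, and transfer back by localization (the paper cites Lemma \ref{l.localization} where you invoke Corollary \ref{c.prime of subhypergraph}, an immaterial difference). Your careful verification that $C_{2n+1}\in\mathcal S(\tilde H)$ --- in particular the observation that this requires every edge of $H'$ contained in $V(C_{2n+1})$ to be a cycle edge, i.e.\ that the cycle sits chordlessly in $H'$ --- is a point the paper asserts in a single unjustified line, and your caveat that ``subhypergraph'' must be read in this induced sense for the count $|E(\tilde H)|=2n+1$ to hold is well taken.
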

\begin{proof}
	Say $H'=(V',E')$. We take the subhypergraph $\tilde H:= H_{\widehat C_{2n+1}}$ of $H$ on the vertex set $\widehat C_{2n+1}.$ 
	Notice that $\tilde H$ has a shadow on $C_{2n+1}$. That is the odd cycle $C_{2n+1}$.
	Thus, from Proposition \ref{Corollary 3.5 cite{FHVT2}} and Theorem \ref{t.oddcycle}, $\mathfrak p_{\widehat C_{2n+1}}\in \ass (J(\tilde H)^2)$. 
	Moreover, from Lemma \ref{l.localization}, we have $\mathfrak p_{\widehat C_{2n+1}}\in \ass (J(H)^2)$.
\end{proof}	

\begin{corollary}\label{c.oddcyle2}
	Let $H$ be a hypergraph and $\tilde H$  a subhypergraph of $H$. If an odd cycle $C_{2n+1}\in \mathcal S(\tilde H)$, then $\mathfrak p_{\widehat C_{2n+1}}\in \ass ((J(H)^2)$.
\end{corollary}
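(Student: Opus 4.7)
The plan is to bootstrap Corollary \ref{c.oddcyle1} by applying it to the subhypergraph $\tilde H$ in place of the ambient $H$, and then to descend the resulting associated prime back to $H$ via Corollary \ref{c.prime of subhypergraph}.

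For the first step, I would note that any hypergraph is trivially a subhypergraph of itself, so in particular $C_{2n+1}$ is a subhypergraph of $C_{2n+1}$. Since the hypothesis gives $C_{2n+1} \in \mathcal{S}(\tilde H)$, Corollary \ref{c.oddcyle1} applied with $(\tilde H, C_{2n+1})$ playing the roles of $(H, H')$ delivers
$$\mathfrak{p}_{\widehat{C_{2n+1}}} \in \ass J(\tilde H)^2.$$
Here the hat is computed relative to $\tilde H$, and in fact $\widehat{C_{2n+1}} = \bigcup_{e \in E(\tilde H)} e$, because by the definition of a shadow every edge $e$ of $\tilde H$ satisfies $e \cap V(C_{2n+1}) \in E(C_{2n+1}) \subseteq V(C_{2n+1})$, so no edge is excluded from the union.

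For the second step, I would read ``subhypergraph'' in the hypothesis as induced subhypergraph (consistent with the usage in the proof of Corollary \ref{c.oddcyle1}), so that $\tilde H = H_{V(\tilde H)}$ and the inclusions $\widehat{C_{2n+1}} \subseteq V(\tilde H) \subseteq V$ hold. Then Corollary \ref{c.prime of subhypergraph} with $F = \widehat{C_{2n+1}}$ and $U = V(\tilde H)$ converts the membership displayed above into $\mathfrak{p}_{\widehat{C_{2n+1}}} \in \ass J(H)^2$, which is the desired conclusion.

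Rather than a single hard step, this proof is a light repackaging: the genuine combinatorial substance already lives in Theorem \ref{t.oddcycle} and Corollary \ref{c.oddcyle1}, and the present corollary is just a relativization of that content to a subhypergraph of $H$. The only point requiring any care is the bookkeeping across two nested layers, namely ``shadow of subhypergraph'' versus ``shadow of $H$'', together with the interpretation of ``subhypergraph'' as induced; I do not anticipate any genuine obstacle beyond this.
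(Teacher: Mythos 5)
Your proof is correct, and since the paper states this corollary without proof, your argument supplies exactly the intended one: it mirrors the proof of Corollary \ref{c.oddcyle1}, reducing to Theorem \ref{t.oddcycle} applied to $\tilde H$ (with $\widehat{C_{2n+1}}=\bigcup_{e\in E(\tilde H)}e$ computed relative to $\tilde H$) and then descending to $H$ via Corollary \ref{c.prime of subhypergraph}. Your reading of ``subhypergraph'' as the induced subhypergraph is the right one --- it is the only notion the paper defines and is precisely what makes the localization step legitimate.
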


\begin{example} \label{e.shadow} Let $H=(V,E)$ be the hypergraph with the vertex set $$V=\{x_1,x_2,x_3,x_4,x_5,x_6,x_7,x_8\}$$ and the edge set 
	$$E=\{ \{x_1,x_2,x_6\}, \{x_2,x_3,x_6\},\{x_3,x_4,x_8\},\{x_4,x_5,x_6\},\{x_1,x_5,x_7\} \}.$$
\begin{center}
			\begin{tikzpicture}[scale=0.6]
		\node (f1) at (-3.3,1.59) {$H \ =\ $};
		\node (v4) at (0,0) {};
		\node (v3) at (2,1) {};
		\node (v2) at (1.67,3.27) {};
		\node (v1) at (-0.54,3.58) {};
		\node (v5) at (-1.57,1.59) {};
		\node (m1) at (0.56,3.39)[above]{$x_6$};
		\node (m2) at (1.83,2.11) [right]{$x_6$};
	    \node (m3) at (1,0.5) [below right]{$x_8$};
		\node (m4) at (-0.78,0.8) [below left]{$x_6$};
		\node (m5) at (-1.05,2.59) [above left]{$x_7$};
		\draw [thick] (v1)--(v2);
		\draw [thick] (v3)--(v2);
		\draw [thick] (v3)--(v4);
		\draw [thick] (v4)--(v5);
		\draw [thick] (v1)--(v5);
		
		\fill (v1) circle (0.1) node [above] {$x_1$};
		\fill (v2) circle (0.1) node [right] {$x_2$};
		\fill (v3) circle (0.1) node [right] {$x_3$};
		\fill (v4) circle (0.1) node [below] {$x_4$};
		\fill (v5) circle (0.1) node [left] {$x_5$};  
		\end{tikzpicture}
\end{center}

	The shadow of $H$ on the vertex set $V'=\{x_1,x_2,x_3,x_4,x_5\}$
	is
	$$H'=(V', \{\{x_1,x_2\}, \{x_2,x_3\},\{x_3,x_4\},\{x_4,x_5\},\{x_1,x_5\}\})\in \mathcal S(H).$$
	
	\begin{center}
		\begin{tikzpicture}[scale=0.6]
		\node (f1) at (-3.3,1.59) {$H' \ =\ $};
		\node (v4) at (0,0) {};
		\node (v3) at (2,1) {};
		\node (v2) at (1.67,3.27) {};
		\node (v1) at (-0.54,3.58) {};
		\node (v5) at (-1.57,1.59) {};
		\node (m1) at (0.56,3.39)[above]{ };
		\node (m2) at (1.83,2.11) [right]{ };
		\node (m3) at (1,0.5) [below right]{ };
		\node (m4) at (-0.78,0.8) [below left]{ };
		\node (m5) at (-1.05,2.59) [above left]{ };
		\draw [thick] (v1)--(v2);
		\draw [thick] (v3)--(v2);
		\draw [thick] (v3)--(v4);
		\draw [thick] (v4)--(v5);
		\draw [thick] (v1)--(v5);
		
		\fill (v1) circle (0.1) node [above] {$x_1$};
		\fill (v2) circle (0.1) node [right] {$x_2$};
		\fill (v3) circle (0.1) node [right] {$x_3$};
		\fill (v4) circle (0.1) node [below] {$x_4$};
		\fill (v5) circle (0.1) node [left] {$x_5$};  
		\end{tikzpicture}
	\end{center}
	
	We see that $H'$ is a graph, precisely it is an odd cycle of length 5.
	By Theorem \ref{t.oddcycle}, we have that
	$$\mathfrak p_V=(x_1,x_2,x_3,x_4,x_5,x_6,x_7,x_8)\in \ass(J(H)^2).$$
	
	Now, we take the shadow of $H$ on the vertex set $V''=\{x_1,x_3,x_5,x_6,x_8\}$. 
    The shadow of $H$ on $V''$ is
    $$H''=(V'', \left\{ \{x_1,x_6\},\{x_3,x_6\},\{x_3,x_8\},\{x_5,x_6\},\{x_1,x_5\} \right\})\in \mathcal S(H).$$
    
    \begin{center}
    	\begin{tikzpicture}[scale=0.7]
    	\node (f1) at (-3.3,1.59) {$H'' \ =\ $};
    	\node (v4) at (0,0) {};
    	\node (v3) at (2,1) {};
    	\node (v2) at (1.67,3.27) {};
    	\node (v1) at (-0.54,3.58) {};
    	\node (v5) at (-1.57,1.59) {};
    	\draw [thick] (v1)--(v2);
    	\draw [thick] (v3)--(v2);
    	\draw [thick] (v3)--(v4);
    	\draw [thick] (v2)--(v5);
    	\draw [thick] (v1)--(v5);
    	
    	\fill (v1) circle (0.1) node [above] {$x_1$};
    	\fill (v2) circle (0.1) node [right] {$x_6$};
    	\fill (v3) circle (0.1) node [right] {$x_3$};
    	\fill (v4) circle (0.1) node [below] {$x_8$};
    	\fill (v5) circle (0.1) node [left] {$x_5$};  
    	\end{tikzpicture}
    \end{center}
    
    Note that $H''$ has a subhypergraph that is a cycle of length 3, $C_3=\{\{x_1,x_6\},\{x_5,x_6\},\{x_1,x_5\}\}.$ By Corollary \ref{c.oddcyle1}, this cycle produces an element in $\ass(J(H)^2).$ So, we get
    $$\mathfrak p_{\widehat C_3}= (x_1,x_2,x_4,x_5,x_6,x_7)\in \ass (J(H)^2).$$
\end{example}

In the following example we show that condition $(b)$ in Definition \ref{d.shadow} is necessary for Theorem \ref{t.oddcycle}. 
\begin{example}
 Let $H=(V,E)$ be the hypergraph with the vertex set $$V=\{x_1,x_2,x_3,x_4,x_5,x_6,x_7,x_8,x_9\}$$ and the edge set 
 \begin{align*}
 E=\{&\{x_1, x_2,x_6, x_8\}, \{x_2,x_3,x_8,x_6\},\{x_3,x_4,x_7,x_9\},\\
 &\{x_4,x_5,x_6,x_8\},\{x_1,x_5,x_7,x_9\},\{x_8,x_9\}, \{x_6,x_7\} \}.
 \end{align*}
 A Macaulay2 computation \cite{MACAULAY2} shows that $$\ass (J(H)^2)= \{\mathfrak p_e \ |\ e\in E \}.$$ 
 
We claim that ignoring the rule $|E|=|E'|$ in the condition (b) of Definition \ref{d.shadow} Theorem \ref{t.oddcycle} does not hold. Indeed, we get on the vertex set $V'=\{x_1,x_2,x_3,x_4,x_5\}$ the hypergraph
 $$H'=(V', \{\{x_1,x_2\}, \{x_2,x_3\},\{x_3,x_4\},\{x_4,x_5\},\{x_1,x_5\}\})\in \mathcal S(H).$$
 That is an odd cycle and, see Lemma \ref{l.oddcycle}, we have
 $$\mathfrak p_{V'}=(x_1,x_2,x_3,x_4,x_5)\in \ass(J(H')^2).$$
\end{example}

In the last part of this section we prove that, under some suitable hypothesis,  all the associated primes of $J(H)^2$ come from some non-trivial shadow  (we will see in Proposition \ref{p.associated square}). We need an auxiliary lemma.

\begin{lemma}\label{l.for square}
	Let $H=(V,E)$ be a hypergraph, and suppose $(J(H)^{s}: m) = \mathfrak p_{V}$ for some monomial $m$.  Let $V'\subsetneq V$ be a proper subset such that $e_i\cap e_j\subseteq V'$ for each $e_i, e_j\in E$,  $i\neq j$.  Then $y^{s-1}$ does not divide $m$ for each $y\in V\setminus V'$. 
\end{lemma}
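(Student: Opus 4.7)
My plan is to argue by contradiction: assume $y^{s-1}\mid m$ for some $y\in V\setminus V'$ and force $m\in J(H)^s$, contradicting $(J(H)^s:m)=\mathfrak{p}_V$. First note that the hypothesis $e_i\cap e_j\subseteq V'$ together with $y\notin V'$ forces $y$ to lie in at most one edge of $H$. If $y$ lies in no edge, then $y$ belongs to no minimal vertex cover and hence appears in no minimal generator of $J(H)$; since $ym\in J(H)^s$, any factorization $ym=g_1\cdots g_s\,M$ (with $g_i$ minimal generators) must have $v_y(M)\ge 1$, and cancelling a $y$ gives $m=g_1\cdots g_s(M/y)\in J(H)^s$, a contradiction.

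So I may assume $y$ lies in a unique edge $e_1$. The key observation is that any minimal vertex cover $T$ of $H$ containing $y$ must satisfy $T\cap e_1=\{y\}$, otherwise $T\setminus\{y\}$ would already be a vertex cover. Given any factorization $ym=x_{T_1}\cdots x_{T_s}M$ with $T_i$ minimal covers, comparing $y$-valuations yields two possibilities: either $y\mid M$, in which case cancellation gives $m\in J(H)^s$ (contradiction); or every $T_i$ contains $y$, $v_y(M)=0$, and
\[
m \;=\; y^{s-1}\Bigl(\prod_{i=1}^{s} x_{T_i\setminus\{y\}}\Bigr)\,M, \qquad v_y(m)=s-1.
\]

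Next I would show that $M$ involves no vertex of $e_1\setminus\{y\}$: if some $x\in e_1\setminus\{y\}$ divided $M$, then $x\cdot x_{T_s\setminus\{y\}}$ would be a (not necessarily minimal) vertex cover of $H$ (covering $e_1$ through $x$ and the remaining edges through $T_s\setminus\{y\}$), and grouping $s-1$ of the surplus $y$'s back into minimal covers would give
\[
m \;=\; \prod_{i=1}^{s-1}x_{T_i}\cdot\bigl(x\cdot x_{T_s\setminus\{y\}}\bigr)\cdot(M/x)\;\in\; J(H)^s,
\]
another contradiction. Combined with $T_i\cap e_1=\{y\}$ for all $i$, this forces $v_x(m)=0$ for every $x\in e_1\setminus\{y\}$. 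To finish, pick any $z\in V\setminus e_1$; the hypothesis gives $zm\in J(H)^s$, so write $zm=g_1\cdots g_s M'$ with $g_i$ minimal vertex covers. Any $g_i$ not containing $y$ must meet $e_1$ in some $x\in e_1\setminus\{y\}$, forcing $v_x(zm)\ge 1$; but $v_x(zm)=v_x(m)=0$ since $z\neq x$. Hence every $g_i$ contains $y$, so $v_y(zm)\ge s$, contradicting $v_y(zm)=v_y(m)=s-1$.

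The main obstacle is the twofold bookkeeping around $y$: one has to argue simultaneously that in every factorization of $ym$ the ``extra'' copies of $y$ are forced into the vertex-cover pieces rather than into the leftover monomial, and that $m$ cannot borrow any vertex of $e_1\setminus\{y\}$ from its non-$y$ support. Only once both obstructions are ruled out does the support condition $v_x(m)=0$ for $x\in e_1\setminus\{y\}$ make the final valuation contradiction on $zm$ unavoidable.
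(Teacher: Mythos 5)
Your proof is correct and takes essentially the same route as the paper's: rule out $v_y(m)\ge s$ by cancelling a $y$ from the leftover factor, use the minimality of the covers containing $y$ (plus a second cancellation) to show that no vertex of $e_1\setminus\{y\}$ divides $m$, and then obtain the contradiction by multiplying by some $z\notin e_1$, which cannot raise the $\mathfrak p_{e_1}$-order of $m$ above $s-1$. The only organizational difference is that you handle all of $e_1\setminus\{y\}$ in one pass, whereas the paper first treats the case $|e_1\setminus V'|=1$ and reduces to it by induction on $|e_1\setminus V'|$; your explicit remarks that $y$ lies in at most one edge, and that the isolated-vertex case is degenerate, are left implicit in the paper.
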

\begin{proof}
	Let $y$ be an element in $V\setminus V'$. We write $m=y^am'$, where, unless to rename, $y\in e_1$ and $y$ does not divide $m'.$
	If $a\ge s$ then, since $ym\in J(H)^s,$ we get $ym=m_1 \cdots m_sM$,  where $m_j$ corresponds to a minimal vertex cover of $H$ for $j \in \lbrace 1,\ldots, s\rbrace$. Thus 
	$y$ divides $M$ and $m=m_1 \cdots m_s(M/y).$ This contradicts $m\notin J(H)^s$. 
	Therefore, we can assume $a=s-1.$ 
	We work by induction on $r=|e_1\setminus V'|$. If $r=1$, i.e., $ e_1=(e_1\cap V')\cup \{y\}$, then from $ym\in J(H)^s,$ we get $ym=(ym_1) \cdots (ym_s)M$, where $ym_j$ are minimal vertex covers of $H$.
	Note that, for each $x_j\in e_1\cap V'$, we can see that $x_j$ does not divide $m_1,\ldots, m_s$ (these are minimal vertex covers) and $x_j$ does not divide $M$ (otherwise we can just delete $y$ and get $m\in J(H)^s$). This implies that $m\notin (\mathfrak p_{e_1})^s.$ To get a contradiction, we just take some $z\notin e_1$ and remember that by hypothesis $zm\in J(H)^s$ but $zm\notin (\mathfrak p_{e_1})^s$.    
	
	If $r>1$, i.e., $ e_1=(e_1\cap V')\cup \{y_1,\ldots, y_r\}$, then just note that $V''=V'\cup \{y_1,\ldots, \widehat{y_{i}},\ldots, y_{r}\}$ satisfies the hypothesis of the theorem and $ e_1=(e_1\cap V'')\cup \{y_i\}$. 
\end{proof}

\begin{proposition}\label{p.associated square} Let $H=(V,E)$ be a hypergraph and  $H'=(V',E')\in \mathcal S(H)$ a shadow of $H$. Assume that $e_i\cap e_j\subseteq V'$ for each $e_i, e_j\in E$, where $i\neq j.$  If $\mathfrak p_{V}\in \ass J(H)^{2}$,  then 	$\mathfrak p_{V'}\in \ass J(H')^2.$    
\end{proposition}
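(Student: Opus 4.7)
The plan is to start from a monomial witness $m$ of $(J(H)^2:m)=\mathfrak p_V$ and show that the same $m$ realizes $(J(H')^2:m)=\mathfrak p_{V'}$ in $K[V']$. The crucial ingredient is to exploit the hypothesis $e_i\cap e_j\subseteq V'$ to force $m$ into the smaller ring $K[V']$.

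First I would apply Lemma \ref{l.for square} with $s=2$. Its hypothesis is exactly the one given here, and its conclusion is that for every $y\in V\setminus V'$ the variable $y^{s-1}=y$ does not divide $m$. Hence $m$ is a monomial in $K[V']$. Moreover $m\notin J(H)^2$ since $(J(H)^2:m)\neq (1)$; combining this with $J(H')\subseteq J(H)$ from Remark \ref{r.J(H') subset J(H)} gives $m\notin J(H')^2$.

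Next I would establish $\mathfrak p_{V'}\subseteq (J(H')^2:m)$. For any $x\in V'\subseteq V$, the relation $xm\in J(H)^2$ provides a factorization $xm=m_1m_2M$ with $m_1,m_2$ minimal monomial generators of $J(H)$ and $M$ a monomial. Since $xm$ only involves variables of $V'$, the same must hold for each $m_i$ and for $M$, and Remark \ref{r.J(H') subset J(H)} then gives $m_i\in J(H)\cap K[V']=J(H')$. Hence $xm\in J(H')^2$. The ideal $(J(H')^2:m)$ is therefore a proper ideal of $K[V']$ containing the maximal ideal $\mathfrak p_{V'}$, so it coincides with $\mathfrak p_{V'}$, yielding $\mathfrak p_{V'}\in \ass J(H')^2$.

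The genuine difficulty is encapsulated in Lemma \ref{l.for square}: without the assumption $e_i\cap e_j\subseteq V'$ the witness $m$ need not be supported on $V'$, and the whole translation argument would break down. Once that support restriction is secured, the remainder is a direct transfer of the decomposition of $xm$ across the identification $J(H')=K[V']\cap J(H)$.
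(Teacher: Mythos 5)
Your proposal is correct and follows essentially the same route as the paper: invoke Lemma \ref{l.for square} with $s=2$ to conclude that the witness $m$ is supported on $V'$, and then transfer the colon computation to $K[V']$ via $J(H')=K[V']\cap J(H)$. The paper states the final step ``therefore $(J(H')^2:m)=\mathfrak p_{V'}$'' without elaboration, whereas you spell out the factorization of $xm$ and the properness of the colon ideal; this is just added detail, not a different argument.
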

\begin{proof}
By the definition of associated primes, there exists a monomial $m\in K[V]$ such that $(J(H)^2:m)=\mathfrak p_V.$ Say $V'=\{x_1,\ldots,x_a \}$ and $V\setminus V'=\{y_1,\ldots,y_b \}$. 
By Lemma \ref{l.for square} $y_j$ does not divide $m$ for $j=1,\ldots, b.$ Then $m\in K[V']$ and therefore $(J(H')^2:m)=\mathfrak p_{V'}.$
\end{proof}

\section{A first case}\label{s.main case}

In this section we investigate the relations between a hypergraph and its shadows in a particular case of study. Precisely, we consider shadows that only differ from the starting hypergraph by one edge and one vertex. 

In this section, we shall use the following notation.
\begin{notation}\label{n.notation}
	Let $H=(V,E)$ be a hypergraph and $H'=(X,E')$ a shadow of $H$ such that
	\begin{itemize}
		\item[(a)] $X=\{x_1,\ldots,x_n\}$ and $V=X\cup\{y\}$; and
		\item[(b)] $y$ only belongs to one edge, say  $e_y\in E$. 
	\end{itemize}
	After renaming, say $e_y=\{x_1,\ldots,x_t,y\}$. We set $e:=e'_y= \{x_1,\ldots,x_t\}$, then we have $H'=\left\{X,(E\setminus \{e_y\})\cup \{e\} \right\}$.
	Moreover, to shorten the notation, $\tilde H$ will denote the subhypergraph of $H$ on $X$.
	We denote by $\mathfrak p_e$ and $\mathfrak p_{e_y}$ the prime ideals generated by the variables in $e$ and $e_y$ respectively. 
\end{notation}
We remark that, in this setting, the hypergraphs $\tilde H$ and $H'$ share the vertex set $X$. Moreover, they share the same edges except for $e$. 

We will abuse notation: given a subset $F\subseteq X\subseteq V$, we will write $\mathfrak{p}_F$ to denote both the ideals in $K[X]$ and in $K[V].$ 

Here, we anticipate the results of this section.  
In the first part of the section, we investigate the relation linking associated primes of $J(\tilde H)^s$ and $J(H')^s$ with the elements in $\ass (J(H)^s)$. We have seen in Corollary \ref{c.prime of subhypergraph} that  if $\mathfrak p\in \ass (J(\tilde H)^s)$ then $\mathfrak p\in \ass (J(H)^s).$ What about the associated prime of $J( H')^s$?  We will show that if $\mathfrak p\in \ass (J( H')^s)$, then either $\mathfrak p+(y)\in \ass (J(H)^s)$ or  $\mathfrak p\in \ass (J(H)^s)$. This depends on a further condition of a monomial $m$ such that $(J(H')^s: m)=\mathfrak p.$ The following diagram summarizes these results.
\begin{center}
 \vspace*{-5pt}
		\begin{tikzpicture}[scale=0.6]
	\matrix [column sep=-5mm, row sep=7mm] {
  	\node (pHt) [draw, shape=rectangle] {$\mathfrak p\in \ass (J(\tilde H)^s) $}; & & \node (pHs) [draw, shape=rectangle] {$\mathfrak p\in \ass (J(H')^s )$};& \\
		& & \node (m) [draw, shape=rectangle] {$\mathfrak p=(J(H')^s: m) $};& \\
		& 	\node (mYes) [draw, shape=circle] {$\mathfrak p= (J(\tilde H)^s: m )$}; & & 	\node (mNo) [draw, shape=circle] {$\mathfrak p\neq (J(\tilde H)^s: m )$};  \\
		\node (pH) [draw, shape=rectangle] {$\mathfrak p\in \ass (J(H)^s) $};	& 	& &  \node (pyH) [draw, shape=rectangle] {$\mathfrak p+(y)\in \ass (J(H)^s )$}; \\
	};
	\draw[->, thick] (pHt) -- (pH);
	\draw[->, thick] (pHs) -- (m);
	\draw[->, thick] (m) -- (mYes);
	\draw[->, thick] (m) -- (mNo);
	\draw[->, thick] (mYes) -- (pHt);
	\draw[->, thick] (mNo) -- (pyH);
	\end{tikzpicture}
	\vspace*{-5pt}
\end{center}
In the second part of the section, we will reverse the investigation. Starting from a prime associated to  $J(H)^s$, we will look for which conditions allow us to find a relation with an element in $J(\tilde H)^s$ or $J(H')^s$. Precisely, if $\mathfrak p \in \ass (J(H)^s)$ and $y\notin \mathfrak p$ then $\mathfrak p\in \ass (J(\tilde H)^s)$. Moreover, if $\mathfrak p= (y)+\mathfrak p'$, it seems natural to ask if $\mathfrak p'\in \ass (J(H')^s),$ which we positively answer under an extra (restrictive) condition.  In the next section, see \ref{e.persistence does not fail}, we will show that not all the primes  $(y)+\mathfrak p'$ associated to $J(H)^s$ come from a prime $\mathfrak p'$ in the shadow.

\begin{center}
	\begin{tikzpicture}
	\matrix [column sep=-5mm, row sep=7mm] {
		& \node (pH) [draw, shape=rectangle] {$\mathfrak p\in \ass J(H)^s $}; &  \\
		\node (NOy) [draw, shape=circle] {$y\notin \mathfrak p$}; & & \node (YESy) [draw, shape=circle] {$y\in \mathfrak p$};	 \\
		\node (pHt) [draw, shape=rectangle] {$\mathfrak p\in \ass J(\tilde H)^s $}; & &
		\node (pTOq) [draw, shape=rectangle] {$\mathfrak p= \mathfrak p' +(y)$ }; \\
		& & \node (qd) [draw, shape=circle] {?}; \\
		& & \node (pHs) [draw, shape=rectangle] {$\mathfrak p'\in \ass J(H')^s $}; \\
	};
	\draw[->, thick] (pH) -- (NOy);
	\draw[->, thick] (NOy) -- (pHt);
	\draw[->, thick]  (YESy)-- (pTOq);
	\draw[->, thick] (pH) -- (YESy);
	\draw[->, thick] (pTOq) --  (qd);
	\draw[->, thick] (qd) --  (pHs);
	\end{tikzpicture}
\end{center}

We start with an auxiliary result.
\begin{lemma}
Let $m \in \mathcal G(J(H))$ be a monomial minimal generator of $J(H)$. If $y|m$, then $x_i \not| m$ for all  $x_i\in e$.
\end{lemma}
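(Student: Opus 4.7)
The plan is to translate the statement into the language of minimal vertex covers and then exhibit a strictly smaller cover if the conclusion fails. Recall from Section \ref{s.prelims} that the monomial minimal generators of $J(H)$ are precisely the $x_T$ where $T\subseteq V$ is a minimal vertex cover of $H$. So I would start by writing $m=x_T$ for a minimal vertex cover $T$ of $H$; the hypothesis $y\mid m$ becomes $y\in T$.

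Next I would argue by contradiction: suppose there exists $x_i\in e$ with $x_i\mid m$, i.e., $x_i\in T$. Consider $T':=T\setminus\{y\}$. The key structural input from Notation \ref{n.notation} is that $y$ belongs to exactly one edge of $H$, namely $e_y=e\cup\{y\}$. Therefore every edge $f\in E$ with $f\neq e_y$ is covered by $T'$ (since $T$ covers $f$ and removing $y$ does not matter for such $f$), while the edge $e_y$ is covered by $T'$ as well because $x_i\in e\subset e_y$ lies in $T'$. Hence $T'$ is a vertex cover of $H$ strictly contained in $T$, contradicting the minimality of $T$. This forces $x_i\notin T$ for every $x_i\in e$, which is exactly the desired divisibility statement.

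There isn't really a serious obstacle here; the whole content of the lemma is that in a minimal vertex cover one never includes both the unique ``private'' vertex $y$ of the edge $e_y$ and any of the remaining vertices of $e_y$, because either one alone already covers $e_y$ and $y$ covers nothing else. The only thing to be careful about is using hypothesis (b) of Notation \ref{n.notation} (namely that $y$ lies in no other edge) at exactly the right place, so that dropping $y$ from $T$ does not uncover any edge different from $e_y$.
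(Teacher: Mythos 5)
Your proof is correct and is essentially the paper's own argument: the paper also passes to the minimal vertex cover $T$ with $x_T=m$ and observes that if some $x_i\in e$ lies in $T$, then $m/y$ (i.e., $T\setminus\{y\}$) is still a vertex cover since $y$ belongs only to $e_y$, contradicting minimality. Your write-up just spells out the edge-by-edge verification that the paper leaves implicit.
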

\begin{proof} In our setting, $y$ only belongs to the edge $e_y=\{x_1,\ldots,x_t,y\}$. 
	Since $m$ is a minimal vertex cover of $H$, if $x_i \in e= \{x_1,\ldots,x_t\}$  divides $m$, then $\dfrac{m}{y}$ is also a vertex cover. This contradicts the minimality of $m$.
\end{proof}

In order to relate the associated primes of $J(H')^s$ to the associated primes of $J(H)^s,$ the following proposition will be crucial.

\begin{proposition}\label{p.on associated to H'}
	Let $(J(H')^s:m)=\mathfrak{p}_F$ be a prime ideal, for some $F\subseteq X.$ Then,
	$$(J(H)^s:m)= \mathfrak p_F + \mathfrak q$$ where $\mathfrak q \subseteq (y).$ In other words, no monomial only involving the variables in $X\setminus F$  belongs to $(J(H)^s:m)$.
\end{proposition}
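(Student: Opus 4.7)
The plan is to analyze the monomial ideal $(J(H)^s:m)$ by classifying its generators according to divisibility by $y$. The key structural fact, which rests on hypothesis (b) of Notation~\ref{n.notation} (that $y$ lies in only one edge $e_y$), is the identification
\[
\mathcal{G}(J(H')) = \{g \in \mathcal{G}(J(H)) : y \nmid g\}.
\]
Indeed, a subset $U \subseteq X$ meets every edge of $H$ if and only if it meets every edge of $H' = (X,(E\setminus\{e_y\})\cup\{e\})$, because $U\cap e_y \neq \emptyset$ translates (for $U\subseteq X$) to $U\cap e \neq \emptyset$; the minimality conditions coincide on both sides.

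With this in hand, the inclusion $\mathfrak{p}_F \subseteq (J(H)^s:m)$ is immediate from Lemma~\ref{l.gens Shad are gens Hyper}, since $J(H')^s \subseteq J(H)^s$ and the hypothesis gives $x_i m \in J(H')^s$ for each $x_i \in F$. For the reverse direction I would take a monomial generator $n$ of $(J(H)^s:m)$ with $y \nmid n$ and show $n \in \mathfrak{p}_F$. In the natural setting $y \nmid m$, write $nm = g_1 \cdots g_s\, M$ with $g_i \in \mathcal{G}(J(H))$. Since $y \nmid nm$ and each $g_i$ is squarefree, no $g_i$ involves $y$, so by the structural identification each $g_i \in \mathcal{G}(J(H'))$. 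Hence $nm \in J(H')^s$, whence $n \in (J(H')^s:m) = \mathfrak{p}_F$. Splitting the minimal monomial generators of $(J(H)^s:m)$ into those divisible by $y$ (contributing to some $\mathfrak{q} \subseteq (y)$) and those not (lying in $\mathfrak{p}_F$) then yields $(J(H)^s:m) = \mathfrak{p}_F + \mathfrak{q}$.

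The main obstacle is keeping the $y$-bookkeeping tight: one really needs both hypothesis (b) of Notation~\ref{n.notation} (so that $y$-free minimal covers of $H$ are minimal covers of $H'$) and the setting $y \nmid m$ (so that $y \nmid nm$ forces each $g_i$ to be $y$-free). Once these are arranged, the rest is a routine partition of the monomial generators of the colon by $y$-divisibility.
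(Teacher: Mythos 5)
Your proposal is correct and takes essentially the same route as the paper: the inclusion $\mathfrak p_F\subseteq (J(H)^s:m)$ comes from $\mathcal G(J(H'))\subseteq \mathcal G(J(H))$ (Lemma \ref{l.gens Shad are gens Hyper}), and the reverse step factors a $y$-free multiple of $m$ into minimal vertex covers of $H$ which, being $y$-free, lie in $J(H')$, forcing membership in $(J(H')^s:m)=\mathfrak p_F$. The paper words this second step as a contradiction for monomials supported on $X\setminus F$ and invokes Lemma \ref{l.m not in J(H)^s} for properness of the colon, but the content is identical to your argument.
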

\begin{proof} 
	Say $F:=\{ x_{i_1}, \ldots, x_{i_k}\}$ and $\{x_{\ell_1},\ldots, x_{\ell_r}\}=X\setminus F.$ Recall that $e=\{x_1,\ldots, x_t\}.$
	First we show that $\mathfrak{p}_F\subseteq(J(H)^s:m)\subsetneq (1).$
	From Lemma \ref{l.m not in J(H)^s} we have $m \notin J(H)^s$ and then $(J(H)^s:m) \neq (1)$. By hypothesis, for each $x_j \in F$ we have $x_jm\in J(H')^s$ i.e. $m=m_1\cdots m_s M$ for some monomials $m_i\in J(H')\subseteq K[X]$. But these monomials, see Remark \ref{r.J(H') subset J(H)} also belongs to $J(H)$. Hence, $x_jm \in J(H)^s$ and $\mathfrak{p}_F \subseteq (J(H)^s:m)\subseteq K[V]$.
	
	In order to conclude the proof, take any monomial $x^{a_1}_{\ell_1} \cdots x^{a_t}_{\ell_t}$ in variables in $X\setminus F$. Suppose that $x^{a_1}_{\ell_1} \cdots x^{a_t}_{\ell_t}m=m_1\cdots m_sM \in J(H)^s$, where the $m_j$'s are minimal generators of $J(H)$ in the variables in $X$. The monomials $m_j \in J(H')$ and then $x^{a_1}_{\ell_1} \cdots x^{a_t}_{\ell_t} \in (J(H')^s:m)=\mathfrak{p}_F$, which is a contradiction.
\end{proof}

\begin{lemma} \label{p.tildeH}
	Let $(J(H)^s:m)= \mathfrak p$ and $y\notin \mathfrak p$. Then $(J(\tilde H)^s:m)= \mathfrak p$. 
\end{lemma}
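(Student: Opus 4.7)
The plan is to prove the ideal equality $(J(\tilde H)^s:m)=\mathfrak{p}$ by establishing each inclusion separately, exploiting the fact that the generators of $J(H)$ and $J(\tilde H)$ differ only by a factor of $y$ attached to certain generators.

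For the easier inclusion $\mathfrak{p}\subseteq (J(\tilde H)^s:m)$, I would first verify that $J(H)\subseteq J(\tilde H)\cdot K[V]$. Classify the minimal vertex covers $T$ of $H$ into two types: those with $y\notin T$, which by Notation \ref{n.notation}(b) must meet $e=\{x_1,\ldots,x_t\}$ in order to cover $e_y$, and which are therefore already vertex covers of $\tilde H$; and those with $y\in T$, in which case $T=T_0\cup\{y\}$ where $T_0$ is a minimal vertex cover of $\tilde H$ disjoint from $e$. In both cases $x_T$ lies in $J(\tilde H)\cdot K[V]$, so $J(H)^s\subseteq J(\tilde H)^s\cdot K[V]$, and taking colon with $m$ gives $\mathfrak{p}=(J(H)^s:m)\subseteq (J(\tilde H)^s:m)$.

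For the substantive inclusion $(J(\tilde H)^s:m)\subseteq \mathfrak{p}$, let $\mu$ be a monomial with $\mu m\in J(\tilde H)^s$, and write $\mu m=g_1\cdots g_s M'$ where each $g_i=x_{T_0^{(i)}}$ corresponds to a minimal vertex cover $T_0^{(i)}$ of $\tilde H$. For those $g_i$ with $T_0^{(i)}\cap e\neq\emptyset$ one checks that $T_0^{(i)}$ is itself a minimal vertex cover of $H$, so $g_i\in\mathcal{G}(J(H))$; for those with $T_0^{(i)}\cap e=\emptyset$, $T_0^{(i)}\cup\{y\}$ is a minimal vertex cover of $H$, so $y\,g_i\in\mathcal{G}(J(H))$. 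Multiplying through by a sufficient power of $y$ (for instance $y^s$) attaches the missing $y$'s to the second-type generators, yielding
$$ y^s\mu m \in J(H)^s. $$
Hence $y^s\mu\in (J(H)^s:m)=\mathfrak{p}$, and since $\mathfrak{p}$ is prime with $y\notin\mathfrak{p}$ by hypothesis, primality forces $\mu\in\mathfrak{p}$.

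Combining both inclusions yields the desired equality. I do not foresee a serious obstacle; the conceptual content is entirely captured by the generator-level dictionary between $J(H)$ and $J(\tilde H)$ controlled by whether a minimal vertex cover of $\tilde H$ already hits $e$ or must be completed by $y$, together with the standard primality argument that extracts $\mu$ from $y^s\mu\in\mathfrak{p}$.
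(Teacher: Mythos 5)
Your proof is correct and takes essentially the same route as the paper's: the substantive inclusion in both rests on the observation that multiplying a factorization of $\mu m$ into minimal vertex covers of $\tilde H$ by $y^s$ lands in $J(H)^s$, so $y^s\mu\in(J(H)^s:m)=\mathfrak p$ and primality with $y\notin\mathfrak p$ forces $\mu\in\mathfrak p$. The only cosmetic difference is that the paper gets the easy inclusion from $J(H)^s\subseteq J(\tilde H)^sK[V]$ directly (an intersection over fewer primes), whereas you verify it generator by generator.
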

\begin{proof} Say $\mathfrak p=\mathfrak p_F$ for some $F\subseteq X.$ First note that $m\notin J(\tilde H)^s.$ Indeed, if  $m=m_1\cdots m_s \cdot M\in J(\tilde H)^s$ with $m_1,\ldots, m_s$ minimal vertex covers of $\tilde H$, then $y^sm\in J(H)^s$. This contradicts $(J(H)^s:m)= \mathfrak p$. We claim that $(J(\tilde H)^s:m)\supseteq \mathfrak p.$ Indeed, if $x_j\in F$, then $x_jm\in J(H)^s\subseteq J(\tilde H)^s.$
	In order to obtain the assertion, we take a monomial $T\notin \mathfrak p_F$ and assume that $Tm\in J(\tilde H)^s$. Again from $Tm=m_1\cdots m_s \cdot M\in J(\tilde H)^s$ with $m_1,\ldots, m_s$ minimal vertex covers of $\tilde H$, we get $Ty^s\in (J(H)^s:m)$ which contradicts the hypothesis.
\end{proof}

\begin{theorem}\label{t.tildeH} Let $(J(H')^s:m)= \mathfrak p$. Then, we have 
\begin{itemize}
	\item[(a)] $(J(H)^s:m) = \mathfrak p$ if and only if $(J(\tilde H)^s:m)= \mathfrak p$;
	\item[(b)] 	$(J(H)^s:m\cdot m_0) = \mathfrak p+(y)$, for some monomial $m_0\notin \mathfrak p,$ if and only if $(J(\tilde H)^s:m)\neq \mathfrak p$.
	
\end{itemize}
\end{theorem}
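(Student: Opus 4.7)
The plan rests on three ingredients: the chain of ideal containments $J(H')\subseteq J(H)\subseteq J(\tilde H)$ (a vertex cover of $H'$ automatically covers $e_y\supseteq e$, while $\tilde H$ has one fewer edge than $H$), Proposition~\ref{p.on associated to H'}, and Lemma~\ref{p.tildeH}. Together, the first two produce the chain
\[
\mathfrak p=(J(H')^s:m)\ \subseteq\ (J(H)^s:m)=\mathfrak p+\mathfrak q\ \subseteq\ (J(\tilde H)^s:m),
\]
with $\mathfrak q\subseteq (y)$. Since $\mathfrak p$ is a prime of the form $\mathfrak p_F$ with $F\subseteq X$, we also have $y\notin\mathfrak p$.

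For (a), the direction $(\Rightarrow)$ is exactly Lemma~\ref{p.tildeH}. For $(\Leftarrow)$, if both the leftmost and rightmost terms of the chain coincide with $\mathfrak p$, then the middle term does as well.

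For (b)$(\Rightarrow)$ I would argue by contrapositive. If $(J(\tilde H)^s:m)=\mathfrak p$, then~(a) forces $(J(H)^s:m)=\mathfrak p$. Given any $m_0\notin\mathfrak p$, the inclusion $y\in(J(H)^s:m\cdot m_0)$ would yield $ym_0\in(J(H)^s:m)=\mathfrak p$; since $\mathfrak p$ is prime and $y\notin\mathfrak p$, this would force $m_0\in\mathfrak p$, contradicting the choice of $m_0$.

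The bulk of the work lies in (b)$(\Leftarrow)$. By~(a), the assumption $(J(\tilde H)^s:m)\neq\mathfrak p$ forces $\mathfrak q\neq 0$. Inspecting the minimal monomial generators of $(J(H)^s:m)$, each generator not in $\mathfrak p$ takes the form $y^{a_i}u_i$ with $a_i\geq 1$ and $u_i$ a monomial supported on $X\setminus F$, since any $x_j$-factor with $j\in F$ would render such a generator redundant. I would let $a:=\min_i a_i\geq 1$, fix such a generator $y^au_1$, and set $m_0:=y^{a-1}u_1$. Then $m_0\notin\mathfrak p$ is clear, and $\mathfrak p+(y)\subseteq (J(H)^s:m\cdot m_0)$ follows because $y\cdot m_0=y^au_1\in(J(H)^s:m)$ and $\mathfrak p\subseteq(J(H)^s:m)\subseteq(J(H)^s:m\cdot m_0)$. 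I expect the main obstacle to be the reverse containment, together with the requirement $m\cdot m_0\notin J(H)^s$: if a monomial $r$ supported on $X\setminus F$ belonged to $(J(H)^s:m\cdot m_0)$, then $y^{a-1}ru_1$ would lie in $(J(H)^s:m)=\mathfrak p+\mathfrak q$; being supported on $\{y\}\cup(X\setminus F)$, it could only sit inside $\mathfrak q$, but its $y$-degree $a-1<a$ would contradict the minimality of $a$. The same inspection applied to $m_0$ itself rules out $m\cdot m_0\in J(H)^s$ and completes the verification.
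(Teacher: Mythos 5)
Your proposal is correct and follows essentially the same route as the paper: both arguments rest on Proposition~\ref{p.on associated to H'} and Lemma~\ref{p.tildeH}, and both prove (b)$(\Leftarrow)$ by selecting a minimal generator $y^{a}u$ of $(J(H)^s:m)$ of least $y$-degree and taking $m_0=y^{a-1}u$. Your two small deviations --- sandwiching $(J(H)^s:m)$ between $(J(H')^s:m)$ and $(J(\tilde H)^s:m)$ for (a)$(\Leftarrow)$, and invoking primality of $\mathfrak p$ for (b)$(\Rightarrow)$ instead of the paper's vertex-cover factorization of $y\,m\,m_0$ --- are harmless streamlinings of the same argument.
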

\begin{proof}
	Note that $y\notin \mathfrak p$, so one implication in (a) follows from Lemma \ref{p.tildeH}. 
	Set $\mathfrak p_F:=\mathfrak p=(J(\tilde H)^s:m)$ and say $X\setminus F=\{x_{\ell_1},\ldots, x_{\ell_r}\}.$ 
	
	By Proposition \ref{p.on associated to H'}, we have $(J(H)^s:m)=\mathfrak p+ \mathfrak q$ where either $\mathfrak q=(0)$  or  $\mathfrak q$ is minimally generated by monomials $y^a\cdot x_{\ell_1}^{a_1}x_{\ell_2}^{a_2}\cdots x_{\ell_r}^{a_r}$ for some $a>0$ and $a_1,\ldots, a_r\ge 0$.
	We claim that $\mathfrak q=(0)$. Indeed, if $T:=y^a\cdot x_{\ell_1}^{a_1}x_{\ell_2}^{a_2}\cdots x_{\ell_r}^{a_r}\in \mathfrak q$, we get $\dfrac{T}{y^a}\in (J(\tilde H)^s:m)=\mathfrak p_F$ which contradicts the hypothesis. 
		
	Now we prove item (b). With the notation as above, we have $(J(H)^s:m)=\mathfrak p+ \mathfrak q$. 
	First assume  $(J(\tilde H)^s:m)\neq \mathfrak p$. Then $\mathfrak q$ is not the zero ideal. Consider the non-empty set $$\{b\in \mathbb{N} \mid y^b\ \text{ divides }\ M\ \text{ for some }\ M\in \mathfrak q\},$$
	and let $a$ be its minimum element.
	Let $T:=y^a\cdot x_{\ell_1}^{a_1}x_{\ell_2}^{a_2}\cdots x_{\ell_r}^{a_r}\in \mathfrak q$ be a monomial minimal generator in $\mathfrak q$. 
	We collect some relevant facts:
\begin{itemize}
	\item $a>0,$ by Proposition \ref{p.on associated to H'};
	\item $m\dfrac{T}{y}\notin J(H)^s$, by the minimality of $T;$
	\item $x_{\ell_1}^{a_1}x_{\ell_2}^{a_2}\cdots x_{\ell_r}^{a_r}\cdot m\dfrac{T}{y}\notin J(H)^s$, by the minimality of $a$;
	\item $y \cdot m\dfrac{T}{y}=mT \in J(H)^s$.
\end{itemize} 
Then, we get 
$\left(J(H)^s: m\dfrac{T}{y}\right)=\mathfrak p+ (y),$ and $\mathfrak p+(y)\in \ass (J(H)^s).$ 

Vice versa, assume $(J(H)^s:m\cdot m_0) = \mathfrak p+(y)$, for some monomial $m_0\notin \mathfrak p.$ So, we have $ymm_0\in J(H)^s$ and say $ymm_0=ym_1\cdot m_2\cdots m_s\cdot M\in J(H)^s$ with $ym_1,\ldots, m_s$ corresponding to minimal vertex covers of $H$. 
Then, we get $mm_0=m_1\cdot m_2\cdots m_s\cdot M\in J(\tilde H)^s$, i.e., $m_0 \in (J(\tilde H)^s:m)$. Since $m_0$ does not involve the variables in $\mathfrak p$, we get a contradiction. 
\end{proof}

In particular, the next result shows that item (a) in Theorem \ref{t.tildeH} is always satisfied if  $\mathfrak p_e\not\subseteq \mathfrak p$.

\begin{proposition}\label{p.shadow1} Let $(J(H')^s:m)= \mathfrak p$. If $\mathfrak p_e \not\subseteq \mathfrak p$, then $(J(H)^s:m)= \mathfrak p$.
\end{proposition}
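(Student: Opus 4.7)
The plan is to leverage Proposition \ref{p.on associated to H'}, which already reduces the question to a single unknown: it guarantees $(J(H)^s:m) = \mathfrak{p}_F + \mathfrak{q}$ with $\mathfrak{q} \subseteq (y)$, where I write $\mathfrak{p} = \mathfrak{p}_F$ for $F \subseteq X$. So I only need to rule out $\mathfrak{q} \neq (0)$. Any minimal monomial generator of $\mathfrak{q}$ must be divisible by $y$ and involve no variable of $F$ (else it would be a proper multiple of some $x_j \in F \subseteq \mathfrak{p}_F$), so it has the shape $T = y^a \cdot x_{\ell_1}^{a_1}\cdots x_{\ell_r}^{a_r}$ with $a \geq 1$ and $x_{\ell_j} \in X \setminus F$. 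The hypothesis $\mathfrak{p}_e \not\subseteq \mathfrak{p}_F$ furnishes a vertex $x_i \in e \setminus F$, and this $x_i$ is the tool that will produce the contradiction.

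Suppose such a $T$ exists. Then $Tm \in J(H)^s$, say $Tm = g_1 \cdots g_s \cdot M$ with $g_j \in \mathcal{G}(J(H))$. Let $k$ be the number of $g_j$'s divisible by $y$; after reindexing, write $g_j = y \cdot g_j'$ for $j \le k$, and $y \nmid g_j$ for $j > k$. I claim each $g_j$ with $j > k$ lies in $J(H')$, and each $x_i \cdot g_j'$ with $j \le k$ lies in $J(H')$. The first assertion is immediate: a minimal cover of $H$ avoiding $y$ must meet $e$ in order to cover $e_y$, hence already covers the edge $e$ of $H'$. For the second, the minimality of $g_j = y \cdot g_j'$ forces $g_j' \cap e = \emptyset$ (otherwise $g_j'$ alone would cover every edge, including $e_y$, contradicting minimality); on the other hand $g_j'$ covers every edge of $H$ except $e_y$, so $x_i \cdot g_j'$ covers all of $E \setminus \{e_y\}$ together with $e$, i.e., is a cover of $H'$.

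Putting this together, $x_i^k \cdot (g_1' \cdots g_k') \cdot (g_{k+1} \cdots g_s) \in J(H')^s$. Cancelling the $y$-power from the identity $Tm = y^k \cdot (g_1' \cdots g_k') \cdot (g_{k+1} \cdots g_s) \cdot M$ and multiplying through by $x_i^k$ yields $x_i^k \cdot (x_{\ell_1}^{a_1}\cdots x_{\ell_r}^{a_r}) \cdot m \in J(H')^s$, so $x_i^k \cdot x_{\ell_1}^{a_1}\cdots x_{\ell_r}^{a_r} \in (J(H')^s:m) = \mathfrak{p}_F$, contradicting that this monomial only involves variables of $X \setminus F$. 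The main obstacle is the case $k \ge 1$: when some minimal covers of $H$ realise the edge $e_y$ through $y$, one has to reroute through $e$, and this is precisely where the extra vertex $x_i \in e \setminus F$ provided by $\mathfrak{p}_e \not\subseteq \mathfrak{p}_F$ is essential; the case $k = 0$ is a direct translation back to $J(H')^s$.
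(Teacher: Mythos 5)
Your argument is correct and follows essentially the same route as the paper's: reduce via Proposition \ref{p.on associated to H'} to showing $\mathfrak q=(0)$, factor $Tm$ into minimal covers, and substitute a vertex $x_i\in e\setminus F$ for $y$ in the covers through $y$ to land back in $J(H')^s$ and contradict $(J(H')^s:m)=\mathfrak p_F$. Your bookkeeping is in fact slightly cleaner, since you cancel $y$-powers directly instead of first arguing (as the paper does via minimality of $T$) that $y$ does not divide the residual factor $M$.
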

\begin{proof}
	Say $\mathfrak p= \mathfrak p_F$ with $F:=\{ x_{i_1}, \ldots, x_{i_k}\}$ and $\{x_{\ell_1},\ldots, x_{\ell_r}\}=X\setminus F.$
	By Proposition \ref{p.on associated to H'} we have $(J(H):m)=\mathfrak p+ \mathfrak q$ where $\mathfrak q$ is an ideal minimally generated by monomials which are not only in variables  $\{x_{\ell_1},\ldots, x_{\ell_r}\}= X\setminus F$; i.e., a minimal generator of $\mathfrak q$ is a monomial $y^bx_{\ell_1}^{a_1}\cdots x_{\ell_r}^{a_r}$ for some $a_1, \ldots, a_r\ge 0$ and $b>0$. 
	Assume on the contrary that  $\mathfrak q\neq 0$. 
	Take any  minimal generator in $\mathfrak q$, say $T:=y^bx_{\ell_1}^{a_1}\cdots x_{\ell_r}^{a_r}$. 
	Then $m\cdot T= m_1\cdots m_s\cdot M\in J(H)^s $ where the $m_i$'s are minimal vertex covers of $H.$ Note that $y$ does not divide $M$. Otherwise,  we get  $x_{\ell_1}^{a_1}x_{\ell_2}^{a_2}\cdots x_{\ell_r}^{a_r}y^{b-1}\in (J(H)^s:m)$, contradicting the minimality of $T.$
	Then we can write, after relabeling, $m_{i}=ym_{i}'$ for $i=1, \ldots b,$ $m\cdot T= (ym_1')\cdots(ym_b')\cdot m_{b+1} \cdots  m_s\cdot M\in J(H)^s.$ 
	Say $x_1\in \mathfrak p_e$ and $x_1\notin \mathfrak p$, then we get  $$m\cdot T\dfrac{x_1^b}{y^b}= (x_1m_1')\cdots(x_1m_b')\cdot m_{b+1} \cdots  m_s\cdot M\in J(H)^s.$$	
	But $m\cdot T\dfrac{x_1^b}{y^b}$ only contains variables of $X$. Then $T\dfrac{x_1^b}{y^b}\in (J(H'):m)= \mathfrak p.$ By Proposition \ref{p.on associated to H'}, this is a contradiction since $T\dfrac{x_1^b}{y^b}$ only contains variables not in $\mathfrak p.$ 
\end{proof}

Recall that by Corollary \ref{c.prime of subhypergraph}, a prime associated to  $J(H)^s$ either belongs to $\ass (J(\tilde H)^s)$ or it contains the variable $y$. This is summarized in the following statement.
\begin{corollary}\label{c.dec ass J(H)^s} We have
	$$\ass (J(H)^s) = \ass (J(\tilde H)^s) \cup \mathcal A,$$ where if $\mathfrak p\in \mathcal A$, then $y\in \mathfrak p.$
\end{corollary}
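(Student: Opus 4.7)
The plan is to partition $\ass(J(H)^s)$ according to whether $y$ belongs to the prime and then apply Corollary \ref{c.prime of subhypergraph} to the primes that do not contain $y$. Define $\mathcal{A} := \{\mathfrak{p} \in \ass(J(H)^s) : y \in \mathfrak{p}\}$; then the inclusion $\mathcal{A} \subseteq \ass(J(H)^s)$ is tautological, and it suffices to establish the two containments
$$\ass(J(\tilde H)^s) \subseteq \ass(J(H)^s) \quad \text{and} \quad \ass(J(H)^s) \setminus \mathcal{A} \subseteq \ass(J(\tilde H)^s).$$

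For the first containment, take any $\mathfrak{p}_F \in \ass(J(\tilde H)^s)$ with $F \subseteq X$. Since $\tilde H = H_X$ and $F \subseteq X \subseteq V$, Corollary \ref{c.prime of subhypergraph} applied with $U = X$ gives exactly the equivalence
$$\mathfrak{p}_F \in \ass(R/J(H)^s) \Longleftrightarrow \mathfrak{p}_F \in \ass(K[X]/J(H_X)^s),$$
so $\mathfrak{p}_F \in \ass(J(H)^s)$ as required.

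For the second containment, let $\mathfrak{p}_F \in \ass(J(H)^s) \setminus \mathcal{A}$. Since $y \notin \mathfrak{p}_F$, we have $F \subseteq X$, and the same equivalence from Corollary \ref{c.prime of subhypergraph} now runs in the reverse direction to give $\mathfrak{p}_F \in \ass(K[X]/J(\tilde H)^s) = \ass(J(\tilde H)^s)$. Combining the two containments yields the claimed decomposition.

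I do not anticipate any genuine obstacle here, since all the work has already been done in Corollary \ref{c.prime of subhypergraph}; the only subtlety is the purely bookkeeping observation that a squarefree prime fails to contain $y$ precisely when its generating set is contained in $X$, which is what allows the corollary to apply with $U = X$.
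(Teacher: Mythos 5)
Your proof is correct and follows exactly the paper's own justification: the authors simply remark before the statement that, by Corollary \ref{c.prime of subhypergraph} applied with $U=X$, an associated prime of $J(H)^s$ not containing $y$ lies in $\ass(J(\tilde H)^s)$, and conversely. Your write-up just makes the two containments and the bookkeeping about supports explicit.
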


\begin{question}\label{q.shadow}
	Do the elements in $\mathcal A$, mentioned in Corollary \ref{c.dec ass J(H)^s}, all come from the shadow?  More precisely, 	if $\mathfrak p=\mathfrak p'+(y)\in \ass J(H)^s$, then is $\mathfrak p'\in \ass J(H')^s$?  
\end{question}

We will show in the next section, see Example \ref{e.persistence does not fail}, that such question has in general a negative answer.
By the way, in the next theorem, we positively answer this question under a suitable condition.

\begin{theorem}\label{t.from H to H'}
	Let $\mathfrak p=\mathfrak p'+(y)\in \ass (J(H)^s)$. If $\mathfrak p\notin \ass (J(H)^s:y)$, then $\mathfrak p'\in \ass (J(H')^s)$.
\end{theorem}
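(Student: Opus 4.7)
\textbf{Proof proposal for Theorem \ref{t.from H to H'}.}

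My plan is to show that a witness $m$ for $\mathfrak{p}\in\ass(J(H)^s)$ can be taken in $K[X]$, and then to argue that this same monomial witnesses $\mathfrak{p}'\in\ass(J(H')^s)$. The hypothesis $\mathfrak p\notin\ass(J(H)^s:y)$ enters precisely to secure the first step.

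First I pick any monomial $m$ with $(J(H)^s:m)=\mathfrak p=\mathfrak p'+(y)$. I claim $y\nmid m$. Indeed, if $m=ym''$ for some monomial $m''$, then the standard colon identity gives
$((J(H)^s:y):m'')=(J(H)^s:ym'')=(J(H)^s:m)=\mathfrak p$; since $m\notin J(H)^s$ means $m''\notin(J(H)^s:y)$, this would exhibit $\mathfrak p$ as an associated prime of $J(H)^s:y$, contradicting the hypothesis. Hence $m\in K[X]$.

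Next, I will exploit the identity $J(H)^s\cap K[X]=J(H')^s$. Using Notation \ref{n.notation} and the descriptions $J(H)=J(\tilde H)\cap\mathfrak p_{e_y}$, $J(H')=J(\tilde H)\cap\mathfrak p_e$, the minimal vertex covers of $H'$ are precisely the minimal vertex covers of $H$ that avoid $y$; equivalently, $\mathcal G(J(H'))$ is the set of elements of $\mathcal G(J(H))$ that are free of $y$. Any monomial in $J(H)^s$ avoiding $y$ is therefore a product of elements of $\mathcal G(J(H'))$ times a monomial in $K[X]$, so it belongs to $J(H')^s$; the reverse inclusion $J(H')^s\subseteq J(H)^s\cap K[X]$ follows from Lemma \ref{l.gens Shad are gens Hyper}. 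With this in hand I check that $(J(H')^s:m)=\mathfrak p'$. For $x\in\mathfrak p'\subseteq\mathfrak p$, we have $xm\in J(H)^s$; since $x\in X$ and $m\in K[X]$, $xm\in K[X]$, hence $xm\in J(H)^s\cap K[X]=J(H')^s$, giving $\mathfrak p'\subseteq(J(H')^s:m)$. Conversely, any monomial $T\in(J(H')^s:m)$ satisfies $Tm\in J(H')^s\subseteq K[X]$, which forces $T\in K[X]$, and also $Tm\in J(H)^s$, which forces $T\in(J(H)^s:m)=\mathfrak p$; so $T\in\mathfrak p\cap K[X]=\mathfrak p'$. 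Finally, $m\notin J(H)^s\supseteq J(H')^s$, so the colon is proper. This yields $\mathfrak p'\in\ass(J(H')^s)$.

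The main obstacle I foresee is the first step: recognizing that the rather technical-looking hypothesis $\mathfrak p\notin\ass(J(H)^s:y)$ is exactly the condition that strips $y$ out of every witness of $\mathfrak p$. Once $m\in K[X]$ is secured, the rest is a clean consequence of $J(H')^s$ being the ``$y$-free part'' of $J(H)^s$ together with the fact that $\mathfrak p'=\mathfrak p\cap K[X]$ is a face ideal in $K[X]$.
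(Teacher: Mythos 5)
Your proof is correct, but it takes a genuinely different and more elementary route than the paper's. The paper's proof applies the containment $\Ass(R/I)\subseteq\Ass\bigl(R/(I:y)\bigr)\cup\Ass\bigl(R/(I+(y))\bigr)$ arising from the short exact sequence $0\to K[V]/(J(H)^s:y)\to K[V]/J(H)^s\to K[V]/(J(H)^s+(y))\to 0$ (citing Matsumura, Theorem 6.3); the hypothesis $\mathfrak p\notin\ass(J(H)^s:y)$ kills the first branch, and the identification $J(H)^s+(y)=J'+(y)$, with $J'$ the cone of $J(H')^s$, then gives $\mathfrak p'\in\ass(J(H')^s)$. You instead work with an explicit monomial witness: the colon identity $(J(H)^s:ym'')=\bigl((J(H)^s:y):m''\bigr)$ shows the hypothesis forces every witness $m$ of $\mathfrak p$ to be $y$-free, and you then verify $(J(H')^s:m)=\mathfrak p'$ directly via $J(H)^s\cap K[X]=J(H')^s$. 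Note that this last identity and the paper's $J(H)^s+(y)=J'+(y)$ are two faces of the same combinatorial fact, namely that $\mathcal G(J(H'))$ is exactly the $y$-free part of $\mathcal G(J(H))$ (which your vertex-cover argument establishes correctly under Notation \ref{n.notation}). The paper's argument is shorter and uses the standard exact-sequence trick; yours avoids any homological input and yields slightly more information, namely that every monomial witness of $\mathfrak p$ for $J(H)^s$ already lies in $K[X]$ and serves as a witness of $\mathfrak p'$ for $J(H')^s$. Both arguments are valid.
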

\begin{proof}
Take the short exact sequence
$$ 0\to \dfrac{K[V]}{J(H)^s:y} \to \dfrac{K[V]}{J(H)^s} \to \dfrac{K[V]}{J(H)^s+(y)}\to 0.$$
From theorem 6.3 in \cite{Matsumura}  we have that
$$\Ass (K[V]/J(H)^s)\subseteq \Ass (K[V]/J(H)^s:y) \bigcup \Ass (K[V]/J(H)^s+(y)).$$
Denoted by $J'$ the cone ideal of  $J(H')^s$ in the ring $K[V]$, note that  $K[V]/J(H)^s+(y)=K[V]/J'+(y).$ 
Since, by hypothesis $\mathfrak p\in \Ass (K[V]/J(H)^s+(y))$, then $\mathfrak p \in \ass (J' +(y))$, i.e. $\mathfrak p' \in \ass (J(H')^s)$.
\end{proof}

\begin{remark}
	Question \ref{q.shadow} has a positive answer if $(J(H)^s:m)=\mathfrak p+(y)$ for some $m\in K[X].$ 
\end{remark}

In the next examples we show how to describe all the associated prime ideals of $J(H)^s$ from $\ass (J(\tilde H)^s)$ and $\ass (J(H')^s).$ 
\begin{example}
	Let $H$ be the hypergraph on the vertex set $V=\{x_1,x_2,x_3,x_4,x_5, y\}$ and the edge set 
	$$E=\{\{x_1,x_2\}, \{x_2,x_3\}, \{x_3,x_4\}, \{x_4,x_5\}, \{x_1,x_5\}, \{x_1,x_3,y\} \}.$$ 
	Set $X:=\{x_1,x_2,x_3,x_4,x_5\}.$ 
	Then the shadow of $H$ on $X:=\{x_1,x_2,x_3,x_4,x_5\}$ is
	$$H' =( X, \{\{x_1,x_2\}, \{x_2,x_3\}, \{x_3,x_4\}, \{x_4,x_5\}, \{x_1,x_5\}, \{x_1,x_3\}  \} ).$$ 
	Moreover, the subhypergraph of $H$ on $X$ is
	$$\tilde H =(X, \{\{x_1,x_2\}, \{x_2,x_3\}, \{x_3,x_4\}, \{x_4,x_5\}, \{x_1,x_5\}  \}).$$
	A Macaulay2 computation shows that	
	$$\ass J(H')^3=\{(x_1,x_2), (x_2,x_3), (x_3,x_4), (x_4,x_5), (x_1,x_5), (x_1,x_3)  \}\cup \{(x_1,x_2,x_3)\}$$
	and
	$$\ass J(\tilde H)^3=\{(x_1,x_2), (x_2,x_3), (x_3,x_4), (x_4,x_5), (x_1,x_5)\}\cup \{(x_1,x_2,x_3,x_4,x_5)\}. $$	 
	
	From Theorem \ref{t.tildeH}, we know that $(x_1,x_2,x_3,y)\in \ass (J(H)^3)$. 
	Moreover, one can check that
	
	$$\ass (J(H)^3)= \ass (J(\tilde H)^3)\cup \{(x_1,x_3,y),(x_1,x_2,x_3,y)\}.$$
	\end{example}

\begin{example}
	Let $H$ be the hypergraph on the vertex set $V=\{x_1,x_2,x_3,x_4,x_5,y\}$ given by
	$$H=(V,\{\{x_1,x_2,x_3\}, \{x_1,x_4\}, \{x_2,x_4\}, \{x_2,x_5\}, \{x_3,x_5\}, \{x_4,x_5,y\} \}).$$
	Set $X:=\{x_1,x_2,x_3,x_4,x_5\}$; then the shadow of $H$ on $X$ is 
	$$H' =\big( X, \{\{x_1,x_2,x_3\}, \{x_1,x_4\}, \{x_2,x_4\}, \{x_2,x_5\}, \{x_3,x_5\}, \{x_4,x_5\} \} \big).$$ 
	Moreover, the subhypergraph of $H$ on $X$ is
	$$\tilde H =\big(X, \{\{x_1,x_2,x_3\}, \{x_1,x_4\}, \{x_2,x_4\}, \{x_2,x_5\}, \{x_3,x_5\} \}\big ).$$
	
	Using Macaulay2, we compute that
	$$\ass J(H')^2=\{(x_1,x_2,x_3), (x_1,x_4), (x_2,x_4), (x_2,x_5), (x_3,x_5), (x_4,x_5) \}\cup$$
	$$\cup \{(x_1,x_2,x_3,x_4),(x_1,x_2,x_3,x_5),(x_2,x_4,x_5)\}$$
	and  $$\ass J(H')^3= \{(x_1,x_2,x_3), (x_1,x_4), (x_2,x_4), (x_2,x_5), (x_3,x_5), (x_4,x_5) \}\cup$$
	$$\cup \{(x_1,x_2,x_3,x_4),(x_1,x_2,x_3,x_5),(x_2,x_4,x_5)\}\cup$$
	$$\cup \{(x_1,x_2,x_3,x_4,x_5)\}.$$
	
	We also know that  $\ass (J(\tilde H)^3)$ and $\ass (J(\tilde H)^2)$ share the same elements, precisely
	$$\{(x_1,x_2,x_3), (x_1,x_4), (x_2,x_4), (x_2,x_5), (x_3,x_5) \}$$
	$$\cup \{(x_1,x_2,x_3,x_4),(x_1,x_2,x_3,x_5),(x_1,x_2,x_3,x_4,x_5)\}.$$	 
	
	Then, from Proposition \ref{p.associated square} and Theorem \ref{t.tildeH}, we have
	$$\ass (J(H)^2)=\ass (J(\tilde H)^2) \cup \{(x_4,x_5,y), (x_2,x_4,x_5,y)\}.$$

	What about $\ass (J(H)^3)$?
	The element $(x_1,x_2,x_3,x_4,x_5)$ appears both in $\ass (J(\tilde H)^3)$ and $\ass (J(H')^3)$ and it contains $(x_4,x_5)$. One can check that
	$$\ass (J(H')^3:m) =(x_1,x_2,x_3,x_4,x_5) $$
	and 
	$$\ass (J(\tilde H)^3:m) =(x_1,x_2,x_3,x_4,x_5)$$
	where $m:=x_1x_2^2x_3x_4^2x_5^2.$
	
	Thus, by Theorem \ref{t.tildeH}, $(x_1,x_2,x_3,x_4,x_5,y)\notin \ass (J(H)^3)$ and one can check that
	$$\ass (J(H)^3)= \ass (J(\tilde H)^3)\cup \{(x_4,x_5,y), (x_2,x_4,x_5,y)\}.$$
\end{example}

\begin{example}\label{e.p and p+y}
	Let $H$ be the hypergraph on the vertex set $V:=\{x_1,x_2,x_3,x_4,x_5,y\},$ given by
	$$H=\left(V, \{\{x_1,x_2\}, \{x_1,x_3\}, \{x_1,x_4\}, \{x_1,x_5,y\}, \{x_2,x_3,x_4,x_5\}\}\right).$$
	Let $H'$ be the shadow of $H$ on the vertex set $X:=\{x_1,x_2,x_3,x_4,x_5\}$ and $\tilde H$ the subhypergraph of $H$ on $X.$ Then 
	$$H'=\left(X, \{\{x_1,x_2\}, \{x_1,x_3\}, \{x_1,x_4\}, \{x_1,x_5\}, \{x_2,x_3,x_4,x_5\}\}\right)$$
	and 
	$$\tilde H=\left(X, \{\{x_1,x_2\}, \{x_1,x_3\}, \{x_1,x_4\},  \{x_2,x_3,x_4,x_5\}\}\right).$$
	
	A Macaulay2 computation shows that
	$$\ass J(H')^2=\{(x_1,x_2), (x_1,x_3), (x_1,x_4), (x_1,x_5), (x_2,x_3,x_4,x_5)\}\cup \{(x_1,x_2,x_3,x_4,x_5)\} $$
	and 
	$$\ass J(\tilde H)^2= \{(x_1,x_2), (x_1,x_3), (x_1,x_4),  (x_2,x_3,x_4,x_5)\}\cup \{(x_1,x_2,x_3,x_4,x_5)\}.$$
	The element $(x_1,x_2,x_3,x_4,x_5)$ appears both in $\ass (J(\tilde H)^3)$ and $\ass (J(H')^3)$ and it contains $(x_1,x_5)$. One can check that
	$$(J(H')^2:x_1x_2x_3x_4x_5) = (x_1,x_2,x_3,x_4,x_5)$$
		but $x_1x_2x_3x_4x_5\in J(\tilde H)^2.$ Thus
	
	$$\ass (J(H)^2)= \ass (J(\tilde H)^2)\cup \{(x_1,x_5,y), (x_1,x_2,x_3,x_4,x_5, y)\}.$$

\end{example}

\section{An application to the persistence property}\label{s.application}
In this section, we apply the results of Section \ref{s.main case} to the persistence problem.
A squarefree monomial ideal $I$ is said to have the persistence property if $\ass(I^s)\subseteq\ass(I^{s+1})$ for any integer $s>0.$
The authors of \cite{TMR} describe an example of a cover ideal of a graph failing the persistence property.
We show how to construct, starting from a hypergraph whose cover ideal fails the persistence property, a new hypergraph whose cover ideal fail such property.  
We use the notation introduced in Section \ref{s.main case}.

\begin{theorem}\label{t.persistence} 
	Let $H=(V,E)$ be a hypergraph where $V=X\cup \{y\}$ such that
	\begin{itemize}
		\item[1.] there exists only one edge $e_y\in E$ containing $y$; 
		\item[2.] $H$ has a shadow on $X$, say $H'=(X, E')\in \mathcal S(H).$ 
	\end{itemize}
	Suppose $J(H')$ fails the persistence property and let $\mathfrak p'\in \ass (J(H')^s)$ and $\mathfrak p'\notin \ass (J(H')^{s+1})$ for some $s>0$.  Set $\tilde H:=H_X$ the subhypergraph of $H$ on $X$.  
	If the following conditions hold,
	\begin{itemize}
		\item[3.] $\mathfrak p'\notin ass(J(\tilde H)^s)$; and
		\item[4.] $\mathfrak p'+(y)\notin ass(J(H)^{s+1}:y)$,
	\end{itemize}
	then $J(H)$ fails the persistence property.
\end{theorem}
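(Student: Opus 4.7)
The plan is to show the candidate prime $\mathfrak{p} := \mathfrak{p}' + (y)$ witnesses the failure of persistence for $J(H)$, i.e., $\mathfrak{p} \in \ass(J(H)^s)$ but $\mathfrak{p} \notin \ass(J(H)^{s+1})$. Both containments follow directly by feeding the hypotheses into Theorem \ref{t.tildeH} and Theorem \ref{t.from H to H'} respectively; no new computation should be required.

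First I would establish $\mathfrak{p}' + (y) \in \ass(J(H)^s)$. Since $\mathfrak{p}' \in \ass(J(H')^s)$, pick a monomial $m$ with $(J(H')^s : m) = \mathfrak{p}'$. By condition 3, $\mathfrak{p}' \notin \ass(J(\tilde H)^s)$, so in particular $(J(\tilde H)^s : m) \neq \mathfrak{p}'$ (otherwise $\mathfrak{p}'$ would be associated to $J(\tilde H)^s$). Thus we are in case (b) of Theorem \ref{t.tildeH}, which produces a monomial $m_0 \notin \mathfrak{p}'$ such that $(J(H)^s : m \cdot m_0) = \mathfrak{p}' + (y)$. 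Hence $\mathfrak{p}' + (y) \in \ass(J(H)^s)$.

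Next I would show $\mathfrak{p}' + (y) \notin \ass(J(H)^{s+1})$, arguing by contradiction. Suppose it is associated to $J(H)^{s+1}$. Condition 4 says $\mathfrak{p}'+(y) \notin \ass(J(H)^{s+1}:y)$, so Theorem \ref{t.from H to H'} (applied to the power $s+1$) yields $\mathfrak{p}' \in \ass(J(H')^{s+1})$. This directly contradicts the hypothesis that $\mathfrak{p}'$ is lost passing from $J(H')^s$ to $J(H')^{s+1}$.

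Combining the two steps, $\mathfrak{p}' + (y)$ lies in $\ass(J(H)^s) \setminus \ass(J(H)^{s+1})$, so $J(H)$ fails persistence. I do not foresee a real obstacle: the hypotheses have been engineered precisely to make the two invocations of Theorem \ref{t.tildeH}(b) and Theorem \ref{t.from H to H'} go through, and condition 3 is exactly what forces us into case (b) of Theorem \ref{t.tildeH} rather than case (a). The one place to be careful is verifying that condition 3 really rules out $(J(\tilde H)^s:m) = \mathfrak{p}'$ for \emph{this} particular $m$, but that is immediate since the equality would by definition place $\mathfrak{p}'$ in $\ass(J(\tilde H)^s)$.
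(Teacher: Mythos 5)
Your proposal is correct and follows essentially the same route as the paper: condition 3 forces case (b) of Theorem \ref{t.tildeH} to place $\mathfrak p'+(y)$ in $\ass(J(H)^s)$, and condition 4 combined with Theorem \ref{t.from H to H'} (contrapositive, using $\mathfrak p'\notin\ass(J(H')^{s+1})$) excludes it from $\ass(J(H)^{s+1})$. Your version is in fact slightly more careful than the paper's, which contains a typo ($\in$ for $\notin$) at exactly the point where you spell out the contradiction.
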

\begin{proof}
	By hypothesis we have $\mathfrak p'\in \ass (J(H')^s)$ and $\mathfrak p'\notin ass(J(\tilde H)^s)$. So by Theorem 
	\ref{t.tildeH}, one gets that
	$$ \mathfrak p'+(y)\in \ass (J(H)^s).$$
	Moreover,  the hypothesis also ensures that
	$\mathfrak p'\in \ass (J(H')^{s+1})$ and $\mathfrak p'+(y)\notin \ass(J(H)^{s+1}:y)$. Hence, from Theorem 
	\ref{t.from H to H'}, we have $ \mathfrak p'+(y)\notin \ass (J(H)^{s+1}).$
\end{proof}

\begin{example}\label{e.persistence fails}
	In \cite{TMR}, Theorem 11 provides an example of a graph failing the persistence property.
	The graph, denoted by $H_4$, has the vertex set on $X:=\{x_1,\ldots, x_{12}\}$ and the edge set
	\begin{align*}
	E:=\{&\{x_1,x_2\}, \{x_1,x_5\}, \{x_1,x_9\}, \{x_1,x_{12}\},\{x_2,x_3\}, \{x_2,x_6\},\{x_2,x_{10}\}, \{x_3,x_4\},\\
	 &\{x_3,x_7\},\{x_3,x_{11}\},\{x_4,x_8\}, \{x_4,x_9\}, \{x_4,x_{12}\}, \{x_5,x_6\},\{x_5,x_8\},\{x_5,x_9\}, \\
	&\{x_6,x_7\},\{x_6,x_{10}\},\{x_7,x_8\},\{x_7,x_{11}\},\{x_8,x_{12}\},\{x_9,x_{10}\},\{x_{10},x_{11}\},\{x_{11},x_{12}\}\}.
	\end{align*}
	The persistence property fails since $\ass (J(H_4)^3)\not\subseteq \ass (J(H_4)^4)$.
	In particular, $\mathfrak p_X\in \ass (J(H_4)^3)\setminus \ass (J(H_4)^4)$.
	
	We consider now the hypergraph $H$ on  vertex set $V:=X\cup \{y\}$, constructed from $H_4$ by adding the variable $``y"$ only to the edge $\{x_1, x_2\}:$  $$H=(X\cup\{y\}, (E\setminus \left\{\{x_1,x_2\}\right\})\cup \left\{\{x_1,x_2, y\}\right\}).$$
	
	\begin{center}
		\begin{tikzpicture}[scale=0.7]
		\node (v1) at (0,0) {};
		\node (v2) at (3,0) {};
		\node (v3) at (6,0) {};
		\node (v4) at (9,0) {};
		\node (v5) at (0,-2) {};
		\node (v6) at (3,-2) {};
		\node (v7) at (6,-2) {};
		\node (v8) at (9,-2) {};
		\node (v9) at (0,-4) {};
		\node (v10) at (3,-4) {};
		\node (v11) at (6,-4) {};
		\node (v12) at (9,-4) {};
		\node (y) at (1.5,0)[above] {$y$};
		\draw [thick] (v1)--(v2);
		\draw [thick] (v3)--(v2);
		\draw [thick] (v3)--(v4);
		\draw [thick] (v1)--(v5);
		\draw [thick] (v5)--(v9);
		\draw [thick] (v5)--(v6);
		\draw [thick] (v6)--(v7);
		\draw [thick] (v7)--(v8);
		\draw [thick] (v4)--(v8);
		\draw [thick] (v8)--(v12);
		\draw [thick] (v9)--(v10);
		\draw [thick] (v10)--(v11);
		\draw [thick] (v11)--(v12);
		\draw [thick] (v2)--(v6);
		\draw [thick] (v10)--(v6);
		\draw [thick] (v3)--(v7);
		\draw [thick] (v7)--(v11);
		\draw [thick] (v1) .. controls (-1.5,-1.5) and (-1.5,-2.5) .. (v9);
		\draw [thick] (v2) .. controls (1.5,-1.5) and (1.5,-2.5) .. (v10);
		\draw [thick] (v3) .. controls (4.5,-1.5) and (4.5,-2.5) .. (v11);
		\draw [thick] (v4) .. controls (7.5,-1.5) and (7.5,-2.5) .. (v12);
		\draw [thick] (v5) .. controls (3,-0.25) and (6,-0.25) .. (v8);
		\draw [thick] (v9) .. controls (11,-8) and (12,-3) .. (v4);
		\draw [thick] (v1) .. controls (-5.5,-4.5) and (3,-7.5).. (v12);
		\fill (v1) circle (0.1) node [above] {$x_1$};
		\fill (v2) circle (0.1) node [above] {$x_2$};
		\fill (v3) circle (0.1) node [above] {$x_3$};
		\fill (v4) circle (0.1) node [above] {$x_4$};
		\fill (v5) circle (0.1) node [left] {$x_5$};
		\fill (v6) circle (0.1) node [above right] {$x_6$};
		\fill (v7) circle (0.1) node [above right] {$x_7$};
		\fill (v8) circle (0.1) node [right] {$x_8$};
		\fill (v9) circle (0.1) node [left] {$x_9$};
		\fill (v10) circle (0.1) node [below] {$x_{10}$};
		\fill (v11) circle (0.1) node [below] {$x_{11}$};
		\fill (v12) circle (0.1) node [right] {$x_{12}$};
		\end{tikzpicture}
		\vspace*{-40pt}
	\end{center}

	With this construction, $H_4$ is the shadow of $H$ on the set $X$. 
	Moreover, the subhypergraph of $H$ on $X$ is $$\tilde H=(X, E\setminus \left\{\{x_1,x_2\}\right\}).$$ 

	By Theorem  \ref{t.persistence}, $H$ fails the persistence property and
	$$\mathfrak p_V\in \ass (J(H)^3)\setminus \ass (J(H)^4).$$
	One can check, for instance using Macaulay2, that actually $\mathfrak p_X\notin ass(J(\tilde H)^3)$ and  $\mathfrak p_V\notin ass(J(H)^4:y).$
\end{example}

\begin{example}\label{e.persistence does not fail}
Take the hypergraph $H'$ on the vertex set $X=\{x_1,\ldots, x_{12}, x_{13}\}$ and the edge set
\begin{align*}
E=\{&\{x_1,x_2,x_{13}\}, \{x_1,x_5\}, \{x_1,x_9,x_{13}\}, \{x_1,x_{12},x_{13}\}, \{x_2,x_3,x_{13}\}, \{x_2,x_6,x_{13}\},\\
&\{x_2,x_{10},x_{13}\},\{x_3,x_4,x_{13}\}, \{x_3,x_7,x_{13}\},\{x_3,x_{11},x_{13}\},\{x_4,x_8,x_{13}\}, \{x_4,x_9,x_{13}\},\\
& \{x_4,x_{12},x_{13}\},\{x_5,x_6,x_{13}\}, \{x_5,x_8,x_{13}\},\{x_5,x_9,x_{13}\},\{x_6,x_7,x_{13}\},\{x_6,x_{10},x_{13}\},\\
&\{x_7,x_8,x_{13}\},\{x_7,x_{11},x_{13}\},\{x_8,x_{12},x_{13}\},\{x_9,x_{10},x_{13}\},\{x_{10},x_{11},x_{13}\},\{x_{11},x_{12},x_{13}\}\}.
\end{align*}
It was constructed from $H_4$, see example \ref{e.persistence fails}, by adding a new variable $``x_{13}"$ to all the edges but $\{x_1,x_5\}.$ 
Consider now the hypergraph $H$ on vertex set $V:=X\cup \{y\}$, constructed from $H'$ by adding the variable $``y"$ only to the edge $\{x_1, x_5\}$:  $$H=(X\cup\{y\}, (E\setminus \left\{\{x_1,x_5\}\right\})\cup \left\{\{x_1,x_5, y\}\right\}).$$
With this construction, $H'$ is the shadow of $H$ on the set $X$. 
A computation with Macaulay2 shows that $\mathfrak p_V\in J(H)^4$, but $\mathfrak p_X\notin J(H')^4$.
Indeed, we found two minimal monomials $m_1$, $m_2$ such that  $\mathfrak p_V= (J(H)^4:m_1)=(J(H)^4:m_2)$ that are
$$m_1:=x_1^2x_2^3x_3^3x_4^2x_5^2 x_6^3 x_7^2 x_8^3 x_9^3 x_{10}^2  x_{11}^3  x_{12}^3y,\ \ m_2:=x_1^2 x_2^2 x_3^2 x_4^2 x_5^2 x_6^2 x_7^2 x_8^2 x_9^2 x_{10}^2  x_{11}^2  x_{12}^2x_{13}y.$$
Both are divisible by $y.$

Then the conditions in the statement of Theorem \ref{t.persistence} are not satisfied. By Theorem \ref{t.from H to H'}, we get $\mathfrak p_V\in \ass (J(H)^4:y)$.
Using Macaulay2, one can check that even if the hypergraph $H'$ fails the persistence property,
and in particular $\mathfrak p_X\in \ass (J(H')^3)\setminus \ass (J(H')^4)$,
we have $\ass (J(H)^3)\subseteq \ass (J(H)^4).$
\end{example}

\end{document}